\newfont{\cyr}{wncyr10 scaled 1100}
\theoremstyle{plain}
\newtheorem{theorem}{Theorem}[section]
\newtheorem{lemma}[theorem]{Lemma}
\newtheorem{propo}[theorem]{Proposition}
\theoremstyle{definition}
\newtheorem{defi}[theorem]{Definition}
\newtheorem{examplewr}[theorem]{Example}
\newtheorem{ass}[theorem]{Assumption}
\theoremstyle{remark}
\newtheorem{obswr}[theorem]{Observation}
\newtheorem{remarkwr}[theorem]{Remark}
\newenvironment{remark}{\begin{remarkwr}\begin{upshape}}{\end{upshape}\end{remarkwr}}
\DeclareMathOperator{\dR}{\mathrm{dR}}
\DeclareMathOperator{\BK}{BK}
\DeclareMathOperator{\et}{et}
\DeclareMathOperator{\cyc}{cyc}
\DeclareMathOperator{\fin}{f}
\DeclareMathOperator{\loc}{loc}
\DeclareMathOperator{\Gr}{Gr}
\DeclareMathOperator{\quo}{quo}
\DeclareMathOperator{\sub}{sub}
\DeclareMathOperator{\TSym}{TSym}
\DeclareMathOperator{\fami}{fam}
\newcommand{\cW}{\mathcal W}
\newcommand{\Q}{\mathbb{Q}}
\newcommand{\Z}{\mathbb{Z}}
\newcommand{\C}{\mathbb{C}}
\newcommand{\Gal}{\mathrm{Gal\,}}
\newcommand{\Fil}{\mathrm{Fil}}
\newcommand{\Frob}{\mathrm{Fr}}
\newcommand{\Fr}{\mathrm{Fr}}
\newfont{\gotip}{eufb10 at 12pt}
\newcommand{\cO}{{\mathcal O}}
\newcommand{\ra}{\rightarrow}
\newcommand{\lra}{\longrightarrow}
\newcommand{\res}{\mathrm{res}}
\newcommand{\fp}{{\mathfrak p}}
\begin{document}

\title[Congruences among Euler systems]{Eisenstein congruences among Euler systems}

\author{\'Oscar Rivero and Victor Rotger}

\begin{abstract}
We investigate Eisenstein congruences between the so-called Euler systems of Garrett--Rankin--Selberg type. This includes the cohomology classes of Beilinson--Kato, Beilinson--Flach and diagonal cycles. The proofs crucially rely on different known versions of the Bloch--Kato conjecture, and are based on the study of the Perrin-Riou formalism and the comparison between the different $p$-adic $L$-functions.
\end{abstract}





\address{O. R.: Simons Laufer Mathematical Sciences Institute, 17 Gauss Way, Berkeley, CA 94720, United States of America}
\email{riverosalgado@gmail.com}

\address{V. R.: IMTech, UPC and Centre de Recerca Matem\`{a}tiques, C. Jordi Girona 1-3, 08034 Barcelona, Spain }
\email{victor.rotger@upc.edu}

\subjclass[2010]{11F33 (primary); 11F67, 11F80 (secondary).}

\maketitle

\tableofcontents

\section{Introduction}

The theory of Euler systems is a powerful machinery that provides deep insight on the arithmetic of Galois representations, Selmer groups and Iwasawa main conjectures. There are relatively few instances of such Euler systems in the literature, and their construction is far from being a systematic issue. This work focuses on the interaction between different kinds of Euler systems, and can be seen as a natural continuation of our previous article \cite{RiRo3}, where we had studied congruence relations between the system of circular units and Beilinson--Kato elements.

This note is concerned with the theory of Eisenstein congruences and the development of a {\it $p$-adic Artin formalism} at the level of Euler systems. The easiest instance of an Euler system is that of circular units, obtained as a weighted combination of cyclotomic units after applying the Kummer map. Its image under the Perrin-Riou regulator (a.\,k.\,a.\,Coleman map) allows us to recover the Kubota--Leopoldt $p$-adic $L$-function.

The three Euler systems we want to discuss here are slightly harder to describe. They are sometimes referred as {\it Euler systems of Garrett--Rankin--Selberg type}, and the most well-known example comes from Kato's work \cite{Kato}, which allowed the proof of the Bloch--Kato conjecture for elliptic curves defined over $\mathbb Q$ in rank 0 and of one of the divisibilities of the Iwawawa main conjecture. The other two instances come from generalizations of that previous construction, when the two modular units involved in Kato's construction are replaced by cuspidal forms. The case of Beilinson--Flach classes has been explored in different works of Bertolini, Darmon and Rotger \cite{BDR2}, and Kings, Lei, Loeffler and Zerbes \cite{LLZ}, \cite{KLZ2}, \cite{KLZ}. The case of diagonal cycles, where a proper Euler system is not available (but a family of cohomology classes varying over Hida or Coleman families) has been worked out by Darmon and Rotger \cite{DR2}, \cite{DR3} and by Bertolini, Seveso and Venerucci \cite{BSV-munster}, \cite{BSV}.

\subsection{The set-up}

Let $N, k > 1$ be positive integers, $\chi_f: (\Z/N\Z)^\times \ra \bar\Q^\times$ a Dirichlet character and $f \in S_k(N,\chi_f)$ a normalized cuspidal eigenform of level $N$, weight $k$ and nebentype $\chi_f$. Fix a prime $p\nmid 6N \varphi(N)$. Similarly, we fix eigenforms $g \in S_{\ell}(N, \chi_g)$ and $h \in S_m(N, \chi_h)$, with $\ell,m \geq 2$ and $\chi_g,\chi_h$ Dirichlet characters. Let $F$ be the finite extension of $\Q$ generated by the field of coefficients of $f$, $g$, $h$ and the values of all Dirichlet characters of conductor $N$; let $\mathcal O$ be its ring of integers and $\mathfrak p \subset \cO$ a prime ideal above $p$. Before continuing, we need to introduce the following objects:
\begin{itemize}
\item[-] $\mathscr H_{\mathbb Z_p}$ is the \'etale $\mathbb Z_p$-sheaf on the open modular curve $Y_1(N)$ given by the Tate module of the universal elliptic curve $\mathcal E/Y_1(N)$, as introduced in \cite[Def. 3.1.1]{KLZ} or \cite[\S2.3]{KLZ2}.
\item[-] $\TSym^k \mathscr H_{\mathbb Z_p}$ is the sheaf of symmetric tensors of degree $k-2$ over $\mathscr H_{\mathbb Z_p}$ defined in \cite[Def. 3.1.2]{KLZ} or \cite[\S2.2]{KLZ2}; after inverting $(k-2)!$ it is isomorphic to the $(k-2)$-th symmetric power.
\item[-] $T_{f,Y}$ is the integral $p$-adic Galois representations given as the $f$-isotypical quotient of  $H^1_{\et}(\bar{Y},\TSym^{k-2}(\mathscr H_{\Z_p})(1)) \otimes_{\mathbb Z_p} \mathcal O_{\mathfrak p}$ of the closed modular curve of level $\Gamma_1(N)$, as defined in \cite[eq. (11)]{RiRo3} in the case of trivial coefficients; see also \eqref{defTf}.
\item[-] $T_{f,X}$ is defined in the same way as $T_{f,Y}$ (see \cite[eq. (11)]{RiRo3}), but using compactly supported cohomology instead. We set, as usual, $V_{f,Y}=T_{f,Y}\otimes \Q$ and $V_{f,X}=T_{f,Y}\otimes \Q$, and note that it holds $V_{f,Y}=V_{f,X}$.
\end{itemize}

We also attach analogous lattices to $g$ and $h$. All along the article, we impose that $k+\ell+m$ is even and that $\chi_f \chi_g \chi_h = 1$. We assume that the triple $(k,\ell,m)$ is balanced, that is, $\ell+m \geq k$, $m+k \geq \ell$ and $k+\ell \geq m$. This means that we are in the so-called {\it geometric} region, where a construction of cohomology classes in an appropriate $H^1$ is meaningful and the corresponding $L$-value vanishes. For simplicity, we also assume $\chi_h \not\equiv 1 \pmod{\mathfrak p}$.

Along this short note we deal with three different objects, whose characterizations and main properties are recalled in Section \S\ref{sec:back}. According to the discussions of \cite[Rk. 3.3]{BSV}, the global classes we introduce may have some bounded denominators, but along the introduction we assume that they are integral at $p$, which is always the case provided that $p$ is large enough (in particular, we need that $p>\max\{k,\ell,m\}$ to invert the factorials appearing in the formulas of loc.\,cit.); we come back to this issue later on in the text. 

Set $c = (k+\ell+m-2)/2$ and consider:
\begin{enumerate}
\item[(a)] The Beilinson--Kato class $\kappa_f \in H^1(\Q, T_{f,Y}(1)(\ell+m-c-1)$, whose characterization is recalled in Section \ref{sec:kato} following \cite{Kato} and \cite{BD}.
\item[(b)] The Beilinson--Flach class $\kappa_{f,g} \in H^1(\Q, T_{f,Y} \otimes T_{g,Y}(m-c))$; we recall its main properties in Section \ref{sec:bf} following \cite{BDR1}, \cite{BDR2} and \cite{KLZ}.
\item[(c)] The diagonal cycle class $\kappa_{f,g,h} \in H^1(\Q, T_{f,Y} \otimes T_{g,Y} \otimes T_{h,Y}(1-c))$, introduced in Section \ref{sec:cycles} following \cite{DR1}, \cite{DR2} and \cite{BSV-munster}.
\end{enumerate}

Moreover, and as a piece of notation, we denote $\bar \kappa_f$, $\bar \kappa_{f,g}$, $\bar \kappa_{f,g,h}$ the reduction of the above integral classes modulo a suitable power of the prime ideal $\mathfrak p$ (usually denoted by $\mathfrak p^t$).

\subsection{Main results}

We use the notations about Eisenstein series introduced in \cite{RiRo3}; in particular, they are indexed by the weight and by a pair of Dirichlet characters whose products of nebentypes is $N$. When $g$ is congruent modulo $\mathfrak p^t$ with the Eisenstein series $E_{\ell}(\chi_g,1)$, as we discussed in loc.\,cit., there is a projection map \[ T_{g,Y} \otimes \mathcal O/\mathfrak p^t \longrightarrow \mathcal O/\mathfrak p^t(\chi_g). \] Hence, the class $\bar \kappa_{f,g}$ projects to an element \[ \bar \kappa_{f,g,1} \in H^1(\Q, T_{f,Y} \otimes \mathcal O/\mathfrak p^t(\chi_g)(m-c)). \] When this projection vanishes modulo $\mathfrak p^t$, and proceeding as in \cite[\S4]{RiRo3}, we may lift $\kappa_{f,g}$ to a class in $H^1(\mathbb Q, T_{f,Y} \otimes T_{g,X})$ and consider instead the projection \[ T_{g,X} \otimes \mathcal O/\mathfrak p^t \longrightarrow \mathcal O/\mathfrak p^t(\ell-1). \] Thus, we obtain a class \[ \bar \kappa_{f,g,2} \in H^1(\Q, T_{f,Y}(\ell+m-c-1) \otimes \mathcal O/\mathfrak p^t). \]

To state our first theorem, let $L_p(f,\chi_g,s)$ stand for the Mazur--Swinnerton-Dyer $p$-adic $L$-function attached to the pair $(f,\chi_g)$, with the conventions of \cite[\S3]{RiRo3}. The result depends on a {\it weak Gorenstein assumption} (properly introduced as Assumption \ref{ass:goren}) and on the conditions introduced as Assumption \ref{ass:bf0} and Assumption \ref{ass:bf1}. The latter makes reference to the conditions of big image and non-exceptional zeros which give a Bloch--Kato type result, which allows us to conclude that the corresponding spaces on which the classes live are one-dimensional. As discussed in the corresponding section, these results are known to hold in many different instances, so the assumptions are quite mild in nature.

\begin{theorem}
Suppose that Assumption \ref{ass:goren} holds for $g$, that the Galois representation $T_{f,Y}$ is absolutely irreducible, and that Assumptions \ref{ass:bf0} and \ref{ass:bf1} also hold.

If $L_p(f,\chi_g,m-c+1) \neq 0 \pmod{\mathfrak p^t}$, then $\bar \kappa_{f,g,1} = 0$ and the following congruence holds in $H^1(\mathbb Q, T_{f,Y}(\ell+m-c-1) \otimes \mathcal O/\mathfrak p^t)$: \[ \bar \kappa_{f,g,2} = \bar \kappa_f. \]
\end{theorem}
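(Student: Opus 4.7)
My plan mirrors the approach of \cite[Thm. 4.1]{RiRo3}, where congruences between circular units and Beilinson--Kato classes were obtained via the same triad of tools: Eisenstein congruences on sheaves, Bloch--Kato vanishing of Selmer groups, and Perrin-Riou reciprocity laws. The Eisenstein congruence $g \equiv E_\ell(\chi_g,1) \pmod{\mathfrak{p}^t}$ yields a short exact sequence
\[ 0 \to \mathcal{O}/\mathfrak{p}^t(\ell-1) \to T_{g,X}/\mathfrak{p}^t \to \mathcal{O}/\mathfrak{p}^t(\chi_g) \to 0 \]
together with a companion sequence on $T_{g,Y}/\mathfrak{p}^t$, whose existence uses the weak Gorenstein hypothesis of Assumption~\ref{ass:goren}. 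Tensoring with $T_{f,Y}(m-c)$ and taking $G_\Q$-cohomology, I recognise $\bar\kappa_{f,g,1}$ as the obstruction to lifting $\bar\kappa_{f,g}$ from $H^1(\Q, T_{f,Y}\otimes T_{g,Y}/\mathfrak{p}^t)$ to $H^1(\Q, T_{f,Y}\otimes T_{g,X}/\mathfrak{p}^t)$; precisely when this obstruction vanishes can $\bar\kappa_{f,g,2}$ be defined.

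To show $\bar\kappa_{f,g,1}=0$ I would first verify that it lands in the Bloch--Kato Selmer group $H^1_f(\Q, T_{f,Y}(\chi_g)(m-c)/\mathfrak{p}^t)$. The local conditions away from $p$ follow from the unramifiedness outside $Np$ of the Beilinson--Flach classes of \cite{KLZ}; at $p$ the crystalline condition should propagate through the Eisenstein projection after a careful analysis of the Panchishkin subquotient of $T_{g,Y}$ modulo $\mathfrak{p}^t$. Assumption~\ref{ass:bf0} then packages the Bloch--Kato conjecture for the twisted motive $f\otimes\chi_g$ in the form we need: the Selmer group is trivial as soon as $L_p(f,\chi_g,m-c+1)$ is a unit modulo $\mathfrak{p}^t$, which is exactly the hypothesis.

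Having killed the obstruction, I lift $\kappa_{f,g}$ to $\tilde\kappa_{f,g}\in H^1(\Q, T_{f,Y}\otimes T_{g,X}/\mathfrak{p}^t)$ and project along $T_{g,X}/\mathfrak{p}^t \twoheadrightarrow \mathcal{O}/\mathfrak{p}^t(\ell-1)$ to obtain $\bar\kappa_{f,g,2}$. By Assumption~\ref{ass:bf1}, the receiving Selmer group is a free $\mathcal{O}/\mathfrak{p}^t$-module of rank one generated by $\bar\kappa_f$, so $\bar\kappa_{f,g,2}=\lambda\cdot\bar\kappa_f$ for some $\lambda$. To pin $\lambda=1$, apply the Perrin-Riou regulator on both sides: the Beilinson--Flach reciprocity law identifies the image of $\bar\kappa_{f,g,2}$ with the ``cuspidal'' factor in the $p$-adic Artin decomposition of the Hida--Rankin $L$-function arising from the Eisenstein congruence, which degenerates to the Mazur--Swinnerton-Dyer $L_p(f,s)$; Kato's reciprocity law in turn identifies that quantity with the image of $\bar\kappa_f$.

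The hard part will be the local analysis at $p$ for $\bar\kappa_{f,g,1}$: since the Eisenstein reduction need not split the Panchishkin filtration on $T_{g,Y}/\mathfrak{p}^t$ even when $g$ is $p$-ordinary, checking the crystalline local condition requires a delicate comparison of two a priori distinct filtrations. A secondary challenge is the bookkeeping of Euler factors, periods, and signs in the factorization of the Hida--Rankin $L$-function, which must be matched precisely with those appearing on the Beilinson--Kato side in order to upgrade the equality $\bar\kappa_{f,g,2}=\lambda\bar\kappa_f$ from ``equal up to a unit'' to equality outright; this is the second place where Assumption~\ref{ass:goren} enters, ensuring that the integral Eisenstein projections respect the natural normalizations.
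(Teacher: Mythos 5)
Your overall architecture (Eisenstein exact sequences, weak Gorenstein for the companion sequence, lifting to compact-support cohomology, Perrin-Riou comparison to pin the scalar) matches the paper closely, and your treatment of the second step, $\bar\kappa_{f,g,2}=\bar\kappa_f$, is essentially what the paper does --- you phrase it as pinning a scalar $\lambda$ in a rank-one module, the paper phrases it as matching the Perrin-Riou regulators and invoking injectivity of the Bloch--Kato logarithm, but these are two faces of the same argument.

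However, there is a genuine gap in your argument for $\bar\kappa_{f,g,1}=0$. You assert that Assumption~\ref{ass:bf0} shows the Bloch--Kato Selmer group $H^1_{\Gr}(\Q, T_{f,Y}(\chi_g)(1-s)\otimes\cO/\fp^t)$ is \emph{trivial} when $L_p(f,\chi_g,\cdot)$ is a unit. But Assumption~\ref{ass:bf0}(a) says precisely the opposite: that space is \emph{one-dimensional}, and this is unconditional of any nonvanishing hypothesis. We are in the geometric (balanced) region, where the Bloch--Kato conjecture predicts a rank-one Selmer group, spanned by the Beilinson--Kato class. So $\bar\kappa_{f,g,1}$ lands in a nontrivial Selmer group and you cannot conclude its vanishing from that alone. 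The correct mechanism is a comparison of $(-)$-projections at $p$: the Beilinson--Kato class has a nonzero $(-)$-projection onto $H^1(\Q_p, T_{f,Y,\circ}^{\quo}(\chi_g)(1-s)\otimes\cO/\fp^t)$, precisely because its image under the Perrin-Riou regulator is $L_p(f,\chi_g,\cdot)$, assumed nonzero mod $\fp^t$; whereas the Beilinson--Flach class $\bar\kappa_{f,g,1}$ has \emph{vanishing} $(-)$-projection there, by the extra local condition satisfied by Beilinson--Flach elements (see \cite[Lemma 8.1.5, Prop.\,8.1.7]{KLZ}). Writing $\bar\kappa_{f,g,1}=c\cdot\bar\kappa_f$ in the one-dimensional Greenberg Selmer module and comparing $(-)$-projections forces $c=0$. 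Your instinct that a \emph{stricter} Selmer group (with zero $(-)$-projection at $p$) vanishes under the $L_p$-unit hypothesis is morally the same fact, but as written you identify that stricter group with the Bloch--Kato Selmer group, which contradicts Assumption~\ref{ass:bf0} and would make the later step --- where you need that same group to be rank one and generated by $\bar\kappa_f$ --- incoherent.
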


The proof is inspired by the ideas introduced in our previous work, using the Perrin-Riou formalism to perform a comparison at the level of $p$-adic $L$-functions. However, here, we have to proceed in a slightly different way: the first natural projection is 0 because of the local condition satisfied by the Beilinson--Flach class, and it is at second order where we can do a proper comparison.

We now mimic that approach in the framework of diagonal cycles. When $h$ is congruent modulo $\mathfrak p^t$ with $E_m(\chi_h,1)$ we may define a class \[ \bar \kappa_{f,g,h,1} \in H^1(\Q, T_{f,Y} \otimes T_{g,Y} \otimes \mathcal O/\mathfrak p^t(\chi_h)(1-c)). \] When it vanishes, and proceeding as in the previous case, we have the refinement \[ \bar \kappa_{f,g,h,2} \in H^1(\Q, T_{f,Y} \otimes T_{g,Y}(m-c) \otimes \mathcal O/\mathfrak p^t). \]

Let $L_p(f,g,\chi_h,c)$ (resp. $L_p(g,f,\chi_h,c)$) stand for the value at $s=c$ of the Rankin--Selberg $p$-adic $L$-function attached to $(f,g,\chi_h)$ with $f$ dominant (resp. $g$ dominant), as recalled in Section \ref{sec:bf}.

\begin{theorem}
Suppose that Assumption \ref{ass:goren} holds for $h$, that the Galois representations $T_{f,Y}$ and $T_{g,Y}$ are absolutely irreducible, and that Assumptions \ref{ass:cycles0} and \ref{ass:cycles1} hold.

If $L_p(f,g,\chi_h,c) \neq 0 \pmod{\mathfrak p^t}$ or $L_p(g,f,\chi_h,c) \neq 0 \pmod{\mathfrak p^t}$, then $\bar \kappa_{f,g,h,1} = 0$ and the following congruence holds in $H^1(\mathbb Q, T_{f,Y} \otimes T_{g,Y}(m-c) \otimes \mathcal O/\mathfrak p^t)$: \[ \bar \kappa_{f,g,h,2} = \bar{\kappa}_{f,g}. \]
\end{theorem}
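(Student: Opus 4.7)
The strategy mirrors that of Theorem~1, but climbs one step up the Garrett--Rankin--Selberg tower: rather than descending from Beilinson--Flach to Beilinson--Kato through an Eisenstein congruence for $g$, we descend from diagonal cycles to Beilinson--Flach through an Eisenstein congruence for $h$. The two main ingredients are the explicit reciprocity laws of Darmon--Rotger and Bertolini--Seveso--Venerucci, relating $\kappa_{f,g,h}$ and $\kappa_{f,g}$ to the triple-product and Rankin--Selberg $p$-adic $L$-functions respectively, together with a Bloch--Kato type rigidity argument that reduces the comparison to a statement about $p$-adic $L$-functions modulo $\fp^t$.

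\textbf{Step 1: vanishing of $\bar\kappa_{f,g,h,1}$.} The diagonal cycle class $\kappa_{f,g,h}$ originates from an algebraic cycle via \'etale Abel--Jacobi, and hence its restriction at $p$ lies in the Bloch--Kato local subspace of the target. Pushing it forward through the projection $T_{h,Y}\otimes\cO/\fp^t\to\cO/\fp^t(\chi_h)$ produces a class in the mod-$\fp^t$ Bloch--Kato Selmer group attached to $T_{f,Y}\otimes T_{g,Y}\otimes\cO/\fp^t(\chi_h)(1-c)$. The conditions $L_p(f,g,\chi_h,c)\not\equiv 0\pmod{\fp^t}$ or $L_p(g,f,\chi_h,c)\not\equiv 0\pmod{\fp^t}$ are exactly what Assumption~\ref{ass:cycles0} requires in order to invoke the available cases of the Bloch--Kato conjecture for the motive $M(f)\otimes M(g)\otimes\chi_h$; they force that Selmer group to be trivial modulo $\fp^t$, yielding $\bar\kappa_{f,g,h,1}=0$.

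\textbf{Step 2: refinement and Perrin-Riou comparison.} The vanishing just proved allows, exactly as in \S4 of \cite{RiRo3} and in the proof of Theorem~1, a lift of $\kappa_{f,g,h}$ valued in $T_{f,Y}\otimes T_{g,Y}\otimes T_{h,X}$; projecting the $T_{h,X}$ factor onto $\cO/\fp^t(m-1)$ then yields the refinement $\bar\kappa_{f,g,h,2}$. The core of the argument is to feed both $\bar\kappa_{f,g,h,2}$ and $\bar\kappa_{f,g}$ through the Perrin-Riou regulator associated to $T_{f,Y}\otimes T_{g,Y}(m-c)$. On the one hand, the regulator of $\kappa_{f,g}$ recovers the Rankin--Selberg $p$-adic $L$-function of Kings--Lei--Loeffler--Zerbes evaluated at $s=c$; on the other, the regulator of $\bar\kappa_{f,g,h,2}$ computes the Eisenstein specialisation (with respect to $h$) of the triple-product $p$-adic $L$-function of Darmon--Rotger (resp.\ Bertolini--Seveso--Venerucci), in whichever dominant variable is compatible with the non-vanishing hypothesis. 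A direct inspection of the interpolation properties and local Euler factors shows that the two specialisations agree modulo $\fp^t$. Assumption~\ref{ass:cycles1} guarantees that the relevant mod-$\fp^t$ cohomology is free of rank one and that the Perrin-Riou map is injective there, so the equality of the regulators lifts to the desired equality $\bar\kappa_{f,g,h,2}=\bar\kappa_{f,g}$ in $H^1(\Q,T_{f,Y}\otimes T_{g,Y}(m-c)\otimes\cO/\fp^t)$.

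The main obstacle is the precise matching, modulo $\fp^t$, between the Eisenstein degeneration of the triple-product $p$-adic $L$-function and the Rankin--Selberg one, with all Euler factors, periods and ordinary $p$-stabilisations tracked on the nose. The dichotomy between $L_p(f,g,\chi_h,c)$ and $L_p(g,f,\chi_h,c)$ reflects the two ordinary $p$-stabilisations of $E_m(\chi_h,1)$, and hence the two admissible choices of dominant variable in the balanced region once $h$ is replaced by an Eisenstein series; either non-vanishing is enough to conclude, because the single diagonal cycle class feeds both reciprocity laws.
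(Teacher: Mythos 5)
Your Step 2 is essentially the paper's argument: lift to $T_{h,X}$ after the vanishing, project to obtain $\bar\kappa_{f,g,h,2}$, compare Bloch--Kato logarithms via the reciprocity laws, match the specialisations of the triple-product and Rankin--Selberg $p$-adic $L$-functions (the paper points to the Petersson-product expressions in \cite[Cor.~4.13]{DR1}), and upgrade using the period normalisations of \cite[\S3.5]{RiRo3} together with injectivity of the Bloch--Kato logarithm (Lemma \ref{lemma:inj-bf}). That part is fine.

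Step 1, however, contains a genuine error. You assert that the non-vanishing $L_p(f,g,\chi_h,c)\not\equiv 0$ (or the $g$-dominant variant) ``forces that Selmer group to be trivial modulo $\mathfrak p^t$,'' and that $\bar\kappa_{f,g,h,1}=0$ then follows by default. This is not what happens, and in fact it cannot happen: Assumption \ref{ass:cycles0}(a) asserts that $H^1_{\Gr}(\Q,T_{f,Y}\otimes T_{g,Y}(\chi_h)(1-c))$ is \emph{one-dimensional}, not trivial, and the Beilinson--Flach class $\kappa_{f,g}$ is a nonzero element of it precisely when the $p$-adic $L$-value is nonzero (this is the content of the reciprocity law, Proposition \ref{bf-law}). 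We are in the balanced region, where $s=c$ lies outside the interpolation range, the classical central $L$-value vanishes, and the Bloch--Kato Selmer group has rank one; the $p$-adic $L$-value being nonzero is equivalent to the Bloch--Kato logarithm of $\kappa_{f,g}$ (in the $(-,+)$ component) being nonzero, and so the Selmer group certainly is not trivial. The actual mechanism the paper uses is a \emph{local} one: the one-dimensional space is spanned by $\kappa_{f,g}$, whose projection to $H^1(\Q_p, T_{f,Y,\circ}^{\quo}\otimes T_{g,Y}^{\sub}(\chi_h)(1-c)\otimes\cO/\mathfrak p^t)$ is nonzero by the reciprocity law and the hypothesis on $L_p$, whereas the diagonal-cycle class $\bar\kappa_{f,g,h,1}$ lands in the balanced local subspace and therefore has \emph{trivial} projection to that same quotient (\cite[Prop.~5.8]{DR3}). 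Since $\bar\kappa_{f,g,h,1}$ is then an element of a rank-one module spanned by a class with nonzero $(-,+)$ projection but itself has zero $(-,+)$ projection, it must vanish. Your proposal, by contrast, does not use the incompatibility of local conditions at all, and the ``Bloch--Kato conjecture forces triviality'' claim is both unsupported and inconsistent with the standing hypotheses.
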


We believe that most of the results we discuss are amenable to generalizations to Hida or Coleman families, but in that case one must be more circumspect about integrality issues. Deformations to weight one of the above congruence formulae may be particularly interesting, given their potential connection to the equivariant Birch and Swinnerton-Dyer conjecture. Furthermore, it would be interesting to explore whether our results could have applications to non-vanishing results, as in \cite[\S3]{Va}, or to the study of classical problems like Goldfeld's conjecture, as in \cite{KrLi}, where they establish congruences between elliptic units and Heegner points using a comparison between the $p$-adic $L$-functions of Katz and Bertolini--Darmon--Prasanna.

We finally point out that some of the results we state contain several assumptions which are likely to be relaxed, but we have not pursued this in an attempt to keep this work short and to avoid technical complications in that regard.

\subsection{Relation to other works}

This note grew up as an attempt to better understand the interactions among Euler systems coming from Eisenstein congruences with those discussed by Loeffler and the first author \cite{LR}, where Coleman families passing through the critical $p$-stabilization of an Eisenstein series are studied. In that setting, we obtain analogous relations (in characteristic zero) between Beilinson--Flach and Beilinson--Kato classes, and between Beilinson--Kato and Beilinson--Flach. Compare \cite[Thms. C1.13, C2.11]{LR} in loc.\,cit. with the main result of this paper. The reason behind this parallelism is that in both scenarios we have two natural filtrations (one corresponding to the usual \'etale cohomology of the modular curve and the other to \'etale cohomology with compact support) whose interplay is used in an analogous way. To be more precise, in this article we consider the lattices attached to the open and closed modular curve, that we denote by $T_{f,Y}$ and $T_{f,X}$, respectively. In this case, there is an inclusion $T_{f,Y} \supset T_{f,X} \supset I \cdot T_{f,Y} \supset I \cdot T_{f,X} \supset \ldots$, where $I$ is the Eisenstein ideal. This is a parallel situation to what we had described in the discussion after \cite[Cor. 6.1.3]{LR}, where the maximal ideal $\mathfrak m$ considered in loc.\,cit. plays the role of the Eisenstein ideal. We hope that the interactions between these two settings can be pushed forward in subsequent works.

Another recent work dealing with the connections between different kinds of Euler systems is the article of Bertolini--Darmon--Venerucci \cite{BDV}, where they prove a conjecture of Perrin-Riou relating Heegner points and Beilinson--Kato elements via the use of Beilinson--Flach elements. We believe that this kind of techniques can have multiple applications to arithmetic problems and to the construction of new Euler systems, and we expect to come back to these issues in forthcoming work.

\subsection{Acknowledgments and funding}

We thank Kazim B\"uy\"ukboduk and David Loeffler for stimulating comments and discussions around the topics studied in this note.

This project has received funding from the European Research Council (ERC) under the European Union's Horizon 2020 research and innovation programme (grant agreement No 682152). O.R. was further supported by the Royal Society Newton International Fellowship NIF\textbackslash R1\textbackslash 202208. V. R. is supported by Icrea through an Icrea Academia Grant. This material is based upon work supported by the National Science Foundation under Grant No. DMS-1928930 while the first author was in residence at the Mathematical Sciences Research Institute in Berkeley, California, during the Spring 2023 semester.

\section{Background: Euler systems and reciprociy laws}\label{sec:back}

\subsection{Modular curves and lattices}\label{sec:lattice}

We begin by recalling some notations about modular curves and lattices. For that purpose, fix algebraic closures $\bar\Q$, $\bar\Q_p$ of $\Q$ and $\Q_p$ respectively, and  embeddings of $\bar\Q$ into $\bar\Q_p$ and $\C$. The former singles out a prime ideal $\mathfrak p$ of $\cO$ lying above $p$ and we let $\mathcal O_{\mathfrak p}$ denote the completion of $\mathcal O$ at $\mathfrak p$. We also fix throughout an uniformizer $\varpi$ of $\mathcal O_{\mathfrak p}$ and an isomorphism $\mathbb C_p \simeq \mathbb C$.

Given a variety $Y/\Q$ and a field extension $F/\Q$, let $Y_F = Y \times F$ denote the base change of $Y$ to $F$ and set $\bar Y = Y_{\bar\Q}$. Fix an integer $N\geq 3$ and let $Y_1(N) \subset X_1(N)$ denote the canonical models over $\mathbb Q$ of the (affine and projective, respectively) modular curves classifying pairs $(A,i)$ where $A$ is a (generalized) elliptic curve and $i: \mu_N \rightarrow A$ is an embedding of group schemes.

Let $k \geq 2$ be an integer and $\theta$ a Dirichlet character. For an ordinary newform $f\in S_k(N,\theta)$ satisfying the congruence $f \equiv E_k(\theta,1) \,\, \mathrm{mod} \, \mathfrak p^t$, let $f(q)= \sum a_n(f) q^n$ denote its $q$-expansion at the cusp $\infty$. Enlarge $F$ so that it also contains the eigenvalues $\{ a_n(f)\}_{n\geq 1}$, and still write $\cO$ for its ring of integers and $\cO_{\fp}$ for its completion at the prime ideal $\mathfrak{p}$ over $p$. For our further use, we denote by $\alpha_f\in \cO_{\mathfrak p}^\times$ (resp. $\beta_f \in \cO_{\mathfrak p}$) the unit root (resp. non-unit root) of the $p$-th Hecke polynomial of $f$. Let $\psi_f: G_{\Q_p} \lra \cO_{\mathfrak p}^\times$ denote the unramified character characterized by $\psi_f(\Frob_p)=\alpha_f$.

Let $I_f^* = (T^*_\ell-a_\ell(f)) \subset \mathbb{T}^*$ denote the ideal associated to the system of eigenvalues of $f$ with respect to the dual Hecke operators. As in the introduction, consider the $\cO_{\fp}$-module
\begin{equation}\label{defTf}
T_{f,Y} = H^1_{\et}(\bar{Y},\TSym^{k-2}(\mathscr H_{\Z_p})(1)) \otimes_{\mathbb Z_p} \mathcal O_{\mathfrak p}/ I^*_f,
\end{equation}
and define similarly $T_{f,X}$, using compact support cohomology instead.

Note that the two lattices $T_{f,X}$ and  $T_{f,Y}$ may give rise to different $\cO_{\fp}[G_{\Q}]$-modules in spite of the fact that the associated rational Galois representations \[ V_f := T_{f,X} \otimes F_{\fp} \simeq T_{f,Y} \otimes F_{\fp} \] are isomorphic.

Finally, let $\Sigma_X$ and $\Sigma_Y$ denote the torsion submodules of $T_{f,X}$ and $T_{f,Y}$ respectively, and let
\begin{equation}\label{Tf-free}
T_{f,Y,\circ}^{\quo} := T_{f,Y}^{\quo}/\Sigma_Y \simeq \cO_{\mathfrak p}(\psi_f)
\end{equation}
denote the free quotient.

Since $f \equiv E_k(\theta,1) \,\mbox{mod}\,\mathfrak p^t$, we have
\begin{equation}\label{Vfcong}
T_{f,Y,\circ}^{\quo} \otimes \mathcal O/\mathfrak p^t \simeq \mathcal O/\mathfrak p^t(\theta),
\end{equation}
which amounts to the congruence $\psi_f \equiv \theta \, (\mbox{mod}\,\mathfrak p^t)$ as unramified characters of $G_{\Q_p}$.

In the cases where we deal with several modular forms of common level $N$, we slightly abuse notations and still write $F$ for the finite extension of $\mathbb Q$ generated by the field of coefficients of all of the them and the values of all Dirichlet characters of conductor $N$. Similarly, $\mathcal O$ stands for the ring of integers and $\mathfrak p \subset \mathcal O$ for a prime ideal above $p$.

As in our previous work, we need the following assumption to establish our main results.
\begin{ass}\label{ass:goren}
The $G_{\mathbb Q_p}$-module $\mathcal O/\mathfrak p^t(\theta)$ does not show up as a quotient of $\Sigma_Y/\Sigma_X$.
\end{ass}

In this case, we will say that $T_{f,Y}$ (or simply $f$, for short) satisfies the {\it weak-Gorenstein condition.} The reason is that this assumption automatically follows if the localization of the Hecke algebra acting on $M_k(\Gamma_1(N))$ at the Eisenstein ideal is Gorenstein.

\vskip 12pt

Several of our results are suitably formulated in the language of families. Let $\mathcal W$ denote the weight space defined as the formal spectrum of the Iwasawa algebra $\Lambda=\mathcal O_{\mathfrak p}[[\mathbb Z_p^{\times}]]$. The set of classical points in $\cW$ is given by characters $\nu_{s,\xi}$ of the form $\xi \varepsilon_{\cyc}^{s-1}$ where $\xi$ is a Dirichlet character of $p$-power conductor, $\varepsilon_{\cyc}$ is the cyclotomic character and $s$ is an integer; this forms a dense subset in $\cW$ for the Zariski topology. Let $\mathcal W^{\circ}$ further denote the set of those points with $\xi=1$; we shall often write $s$ in place of $\nu_s = \nu_{s,1}$. Let $\mathcal W^{\pm} \subset \mathcal W$ denote the topological closure of the set of points $\xi \varepsilon_{\cyc}^{s-1}$ with $(-1)^{s-1} \xi(-1) = \pm 1$. We have $\cW= \cW^+ \sqcup \cW^-$ and we write $\Lambda = \Lambda^+ \oplus \Lambda^-$ for the corresponding decomposition of the Iwasawa algebra.

\subsection{Congruences between canonical periods}\label{sec:periods}


Recall that the de Rham Dieudonn\'e module $D_{\dR}(V_f)$ associated to the eigenform $f$ is a $F_{\mathfrak p}$-filtered vector space of dimension $2$. Poincar\'e duality yields a perfect pairing \[ \langle \, , \, \rangle: D_{\dR}(V_f(-1)) \times D_{\dR}(V_{f^*}) \rightarrow F_{\mathfrak p} \] and there is an exact sequence of Dieudonn\'e modules
\begin{equation}\label{Ds}
0 \rightarrow D_{\dR}(V_f^{\sub}) \rightarrow D_{\dR}(V_f) \rightarrow D_{\dR}(V_f^{\quo}) \rightarrow 0,
\end{equation}
where $D_{\dR}(V_f^{\sub})$ and $D_{\dR}(V_f^{\quo})$ have both dimension $1$.

Falting's theorem associates to $f$ a regular differential form $\omega_f \in \Fil(D_{\dR}(V_f))$, which gives rise to an element in $D_{\dR}(V_f^{\quo})$ via the right-most map in \eqref{Ds} and in turn induces a linear form
\begin{equation}\label{omegaf}
\omega_f \, : \, D_{\dR}(V_{f^*}^{\sub}(-1)) \rightarrow F_{\mathfrak p}, \quad \eta \mapsto \langle \omega_f, \eta \rangle
\end{equation}
that we continue to denote with same symbol by a slight abuse of notation.

There is also a differential $\eta_f$, which is characterized by the property that it spans the line $D_{\dR}(V_f^{\sub}(-1))$ and is normalized so that
\begin{equation}\label{def-etaf}
\langle \eta_f, \omega_{f^*} \rangle = 1.
\end{equation}
Again, it induces a linear form
\begin{equation}\label{etaf0}
\eta_f \, : \, D_{\dR}(V_{f^*}^{\quo}) \rightarrow F_{\mathfrak p}, \quad \omega \mapsto \langle \eta_f, \omega \rangle
\end{equation}

Now, let $T$ be an unramified $\cO_{\mathfrak p}[\Gal(\bar\Q_p/\Q_p)]$-module and set $V=T\otimes F_{\mathfrak p}$. Let $\hat\Z_p^\mathrm{ur}$ denote the completion of the ring of integers of the maximal unramified extension of $\Q_p$, and define the integral Dieudonn\'e module
$$
D(T) := (T \,  \hat\otimes_{\Z_p}  \, \hat\Z_p^\mathrm{ur})^{\Fr_p=1}.
$$
As shown in loc.\,cit.\,we have $D_{\dR}(V) = D(T)\otimes F_{\mathfrak p}$.

As explained e.g.\,in \cite[Prop.\,1.7.6]{FK}, there is a functorial isomorphism of $\cO_{\mathfrak p}$-modules (forgetting the Galois structure) given by
\begin{equation}\label{D(T)=T}
T  \, \stackrel{\sim}{\lra } D(T).
\end{equation}
This map is not canonical as it depends on a choice of root of unity; for $T=\cO_{\mathfrak p}(\chi)$ we take it to be given by the rule $1 \mapsto \, \mathfrak g(\chi)$.

Recall the free $\cO_{\mathfrak{p}}$-quotient
$$
T_{f,Y,\circ}^{\quo}  \simeq \cO_{\mathfrak p}(\psi_f);
$$
note in particular that $T_{f,Y,\circ}^{\quo}$ is unramified.

Recall the uniformizer $\varpi$ fixed at the outset in \S \ref{sec:lattice}. Let $C_{f^*} =(\varpi^r)\subseteq O_{\mathfrak p}$ denote the congruence ideal attached to $f^*$ as defined e.g.\,in \cite{Oh3}. In Hida's terminology, $\varpi^r$, is sometimes called a congruence divisor.
According to \cite[\S 10.1.2]{KLZ2}, the image of $\eta_{f^*|D(T_{f,Y}^{\quo})}$ is precisely the inverse $C_{f^*}^{-1}$ of the congruence ideal, and hence there is an isomorphism
\[ \eta_{f^*} \, : \, D(T_{f,Y,\circ}^{\quo}) \longrightarrow C_{f^*}^{-1}, \quad \omega \mapsto \langle \eta_{f^*},\omega \rangle. \] Setting $\tilde \eta_{f^*} := \varpi^r \cdot \eta_{f^*}$, the above map gives rise to an isomorphism of $O_{\mathfrak p}$-modules
\begin{equation}\label{etaf}
\tilde \eta_{f^*} \, : \, D(T_{f,Y,\circ}^{\quo}) \longrightarrow \mathcal O_{\mathfrak p}, \quad \omega \mapsto \langle \tilde \eta_{f^*},\omega \rangle.
\end{equation}

Choose an isomorphism of $G_{\mathbb Q_p}$-modules.
\begin{equation}\label{bar-iota}
\bar\iota_1: \bar{T}_{f,Y,\circ}^{\quo} \stackrel{\sim}{\lra} \cO/\mathfrak{p}^t(\theta).
\end{equation}
Fixing such a map amounts to choosing the class $(\mathrm{mod}\,{\mathfrak p^t})$ of an isomorphism of local modules $\iota_1: T_{f,Y,\circ}^{\quo}  \simeq \mathcal O_{\mathfrak p}(\psi_f)$.
In light of the functoriality provided by \eqref{D(T)=T} this determines and is determined by the class $(\mathrm{mod}\,{\mathfrak p^t})$ of an isomorphism $D(\iota_1): D(T_{f,Y,\circ}^{\quo}) \simeq D(\cO_{\mathfrak p}(\psi_f))$.

We choose $\iota_1$ as the single isomorphism  making the following diagram commutative:
\begin{equation}\label{rig-iso}
\xymatrix
{ D(T_{f,Y,\circ}^{\quo})  \quad  \ar[r]^{\quad \langle \, , \tilde \eta_{f^*}\rangle}\ar[d]^{D(\iota_1)} & \quad  \mathcal O_{\mathfrak p}  \\
 D(\cO_{\mathfrak p}(\psi_f))  \ar[ur]_{\cdot 1/ \mathfrak g(\theta)}&
 }
\end{equation}
Indeed, since both $ \langle \, , \tilde \eta_{f^*}\rangle$ and $\cdot 1/ \mathfrak g(\theta)$ are isomorphisms, it follows that such a map $D(\iota_1)$ exists and is unique, and this in turn pins down $\iota_1$ and $\bar\iota_1$ in light of \eqref{D(T)=T}.

There are similar results for the differential $\omega_f$. Using again \cite[\S10.1.2]{KLZ2}, there is an isomorphism
\[ \omega_{f^*} \, : \, D(T_{f,Y}^{\sub}) \longrightarrow \mathcal O_{\mathfrak p}, \quad \eta \mapsto \langle \omega_{f^*}, \eta \rangle. \]

Hence, we may choose an isomorphism of $G_{\mathbb Q_p}$-modules
\begin{equation}\label{bar-iota2}
\bar\iota_2: \bar{T}_{f,Y}^{\sub} \stackrel{\sim}{\lra} \cO/\mathfrak{p}^t(1).
\end{equation}
Fixing such a map amounts to choosing the class $(\mathrm{mod}\,{\mathfrak p^t})$ of an isomorphism of local modules $\iota_2: T_{f,Y}^{\sub}  \simeq \mathcal O_{\mathfrak p}(k-1)$. This determines and is determined by the class $(\mathrm{mod}\,{\mathfrak p^t})$ of an isomorphism $D(\iota_2): D(T_{f,Y}^{\sub}) \simeq D(\cO_{\mathfrak p})$.

We choose $\iota_2$ as the single isomorphism  making the following diagram commutative:
\begin{equation}\label{rig-iso2}
\xymatrix
{ D(T_{f,Y}^{\sub})  \quad  \ar[r]^{\quad \langle \, , \omega_{f^*}\rangle}\ar[d]^{D(\iota_2)} & \quad  \mathcal O_{\mathfrak p}  \\
 D(\cO_{\mathfrak p})  \ar[ur]_{\cdot 1}&
 }
\end{equation}
Indeed, since both $ \langle \, , \omega_{f^*}\rangle$ and $\cdot 1$ are isomorphisms, it follows that such a map $D(\iota_2)$ exists and is unique, and this in turn pins down $\iota_2$ and $\bar\iota_2$.

\subsection{The Beilinson--Kato Euler system}\label{sec:kato}

In this section we recall the reciprocity law for Beilinson--Kato elements of arbitrary weight. For details on the constructions, see e.g. \cite{Kato} or \cite{BD}. As in the introduction, let $f \in S_k(N,\chi_f)$, and consider two auxiliary positive integers $(\ell,m)$, with $\ell,m \geq 2$ and the triple $(k,\ell,m)$ balanced. The Beilinson--Kato class is an element
\begin{equation}\label{class:bk}
\kappa_f \in H^1(\mathbb Q, T_{f,Y}(\psi)(1))
\end{equation}
obtained as the cup product of two Eisenstein classes $\delta_g$ and $\delta_h$ of weights $\ell-2$ and $m-2$, respectively. In particular, \[ \delta_g \in H_{\et}^1(\bar Y, \TSym^{\ell-2}(\mathscr H_{\mathbb Z_p})(1)) \quad \text{and} \quad \delta_h \in H_{\et}^1(\bar Y, \TSym^{m-2}(\mathscr H_{\mathbb Z_p})(1)). \] As a piece of notation, we write $c = (k+\ell+m-2)/2$ and $q = (\ell+m-k)/2 > 0$. Note that the classes may also depend on an auxiliary Dirichlet character $\chi$. However, when the choices of $\ell$, $m$ and $\chi$ are clear from the setting, we drop them from the notation as in \eqref{class:bf}. Note that the previous class is integral at $p$ provided that $p \nmid N$, as it follows from the discussions of e.g. \cite[\S5.4]{Kings} and \cite[\S4,7]{KLZ}.

It may be instructive to keep in mind the parallelism with the weight two scenario considered in \cite[\S2]{RiRo3}. In that case, where $k=\ell=m=2$, the Beilinson--Kato class is obtained as the cup product of the modular units \[ u_{1,\chi} \cup u_{1,\xi} \in H_{\et}^2(\bar Y,\mathbb Z_p(2)), \] where $u_{1,\chi} \in H^1(\mathbb Q, \mathbb Z_p(1))$, $u_{1,\xi} \in H^1(\mathbb Q, \mathbb Z_p(1))$. Applying now the Hochschild--Serre map and projecting to the $f$-isotypical component, we get the desired class.

As a piece of notation, let $$\underline{\varepsilon}_{\cyc}: G_{\mathbb Q} \rightarrow \Lambda^{\times}$$ denote the $\Lambda$-adic cyclotomic character which sends a Galois element $\sigma$ to the group-like element $[\varepsilon_{\cyc}(\sigma)]$. It interpolates the powers of the $\mathbb Z_p$-cyclotomic character, in the sense that for any arithmetic point of the form $\nu_{s,\xi}\in \cW$, \begin{equation}\label{lambda-eps-cyc}
\nu_{s,\xi} \circ \underline{\varepsilon}_{\cyc} = \xi \cdot \varepsilon_{\cyc}^{s-1}.
\end{equation}

\begin{propo}\label{perrin1}
Assume that $\alpha_f \psi(p) \not\equiv 1 \pmod{\mathfrak p}$. Then there exists a homomorphism of $\Lambda$-modules
\[
\mathcal L_{f}^-: H^1(\mathbb Q_p, T_{f,Y,\circ}^{\quo} \otimes \Lambda(\psi)(\varepsilon_{\cyc}^q \underline{\varepsilon}_{\cyc})) \longrightarrow \Lambda
\]
satisfying the following interpolation property: for $s \in \mathcal W^{\circ}$, the specialization of $\mathcal L_{f}^-$ at $s$ is the homomorphism
\[
\mathcal L_{f,s}^-: H^1(\mathbb Q_p, T_{f,Y,\circ}^{\quo}(\psi)(s+q-1)) \longrightarrow \mathcal O_{\mathfrak p}
\]
given by
\[
\mathcal L_{f,s}^- = \frac{1- \bar \psi(p) \alpha_{f}^{-1} p^{-s}}{1-\psi(p)\alpha_f p^{s-1}} \times \begin{cases} \frac{(-1)^s}{(s-1)!} \times \langle \log_{\BK}, t^s \tilde \eta_{f^*} \rangle & \text{ if } s \geq 2-q \\ (-s)! \times \langle \exp_{\BK}^*, t^s \tilde \eta_{f^*} \rangle & \text{ if } s < 2-q, \end{cases}
\]
where $\log_{\BK}$ is the Bloch-Kato logarithm and $\exp_{\BK}^*$, the dual exponential map.
\end{propo}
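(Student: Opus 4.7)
The plan is to realize $\mathcal{L}_f^-$ as a specific instance of Perrin-Riou's big logarithm (equivalently, the Coleman map) attached to the unramified rank-one local $\mathcal{O}_{\mathfrak p}$-representation $T := T_{f,Y,\circ}^{\quo}\otimes \mathcal{O}_{\mathfrak p}(\psi)(q)$, trivialized via the isomorphism $\tilde{\eta}_{f^*}$ of \eqref{etaf}. Since $T_{f,Y,\circ}^{\quo}\simeq\mathcal{O}_{\mathfrak p}(\psi_f)$ is unramified and $\psi$ is tame at $p$, the crystalline Frobenius acts on $D(T)$ by $\alpha_f\psi(p)$, and $T$ has Hodge-Tate weight $-q$. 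Perrin-Riou's construction in the rank-one unramified setting, as recalled in \cite[\S8]{KLZ2}, then produces a $\Lambda$-linear homomorphism
\[
\mathcal{L}_T\colon H^1\bigl(\mathbb{Q}_p,\,T\otimes\Lambda(\underline{\varepsilon}_{\cyc})\bigr)\longrightarrow D(T)\otimes_{\mathcal{O}_{\mathfrak p}}\Lambda,
\]
and I would define $\mathcal{L}_f^- := \langle\mathcal{L}_T(\cdot),\,\tilde\eta_{f^*}\rangle$. The hypothesis $\alpha_f\psi(p)\not\equiv 1\pmod{\mathfrak p}$ is used at this step: it ensures that the denominator $1-\psi(p)\alpha_f p^{s-1}$ is a unit modulo $\mathfrak p$ at every arithmetic $s$, so that $\mathcal{L}_f^-$ genuinely takes values in $\Lambda$ rather than in $\Lambda\otimes_{\mathcal{O}_{\mathfrak p}}F_{\mathfrak p}$, ruling out the exceptional-zero phenomenon.

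Next, I would verify the interpolation property. The key input is Perrin-Riou's explicit reciprocity law, which says that for an arithmetic point $\nu_s\in\mathcal{W}^\circ$ the specialization $\nu_s\circ\mathcal{L}_T$ equals the Bloch-Kato logarithm (or dual exponential) on $H^1(\mathbb{Q}_p, T(s-1))$ followed by projection to $D_{\dR}$, corrected by the Euler-type factor $\det(1-p^{-s}\varphi^{-1}\mid D(T))/\det(1-p^{s-1}\varphi\mid D(T))$ and a $\Gamma$-factor. In our rank-one setting, this Euler factor specializes to $(1-\bar\psi(p)\alpha_f^{-1}p^{-s})/(1-\psi(p)\alpha_f p^{s-1})$, as required. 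The $\Gamma$-factor contributes $1/(s-1)!$ when $s\geq 2-q$, since then the Hodge-Tate weight of the twist is $\leq 0$ and $D_{\dR}/\Fil^0$ is the full line, so one pairs against $\log_{\BK}$. For $s<2-q$, the reflection formula converts $\Gamma(s)^{-1}$ into $(-1)^{s-1}(-s)!$ and the pairing switches to the dual exponential. Finally, the insertion of $t^s$ in the Dieudonn\'e pairing encodes the canonical identification $D_{\dR}(V(s))\simeq D_{\dR}(V)\cdot t^{-s}$, while the Gauss-sum factor $\mathfrak{g}(\theta)$ that would otherwise appear has been absorbed into our normalization of $\tilde\eta_{f^*}$ via the commutative diagram \eqref{rig-iso}, which is why no explicit Gauss sum features in the final formula.

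The main obstacle is the precise sign and factorial bookkeeping at the transition $s=2-q$ between the two regimes: reconciling the Hodge-Tate shift by $q$, the twist by $\underline{\varepsilon}_{\cyc}$, and the conventions for arithmetic versus geometric Frobenius together determine the exact sign $(-1)^s$ and the factorial prefactor, and any misalignment propagates throughout. Once these normalizations are pinned down, both the existence of $\mathcal{L}_f^-$ and the stated interpolation follow formally from the functoriality of Perrin-Riou's map combined with the rank-one unramified specialization formulas of \cite[\S3,\S8]{KLZ2}.
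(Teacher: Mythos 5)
Your proposal is correct and follows essentially the same route as the paper: invoke the Perrin-Riou/Coleman big logarithm of \cite[\S 8]{KLZ2} for the rank-one unramified module $T_{f,Y,\circ}^{\quo}(\psi)$, post-compose with the pairing against $\tilde\eta_{f^*}$ from \eqref{etaf}, and read off the interpolation formula from \cite[Appendix B]{LZ}. The only (cosmetic) difference is that the paper phrases the role of the hypothesis $\alpha_f\psi(p)\not\equiv 1\pmod{\mathfrak p}$ as guaranteeing $H^0(\mathbb Q_p,T_{f,\circ}^{\quo}(\psi))=0$, which is the precise condition in \cite[Thm.~8.2.3]{KLZ2} for the big logarithm to land in $D(T)\otimes\Lambda$, whereas you express it in terms of the Euler-factor denominator being a unit; these are two faces of the same fact.
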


\begin{proof}
This follows from Coleman and Perrin-Riou's theory of $\Lambda$-adic logarithm maps as extended by Loeffler and Zerbes in \cite{LZ}. In particular, \cite[Theorem 8.2.3]{KLZ2} yields an injective map
$$
H^1(\mathbb Q_p, T_{f,Y,\circ}^{\quo} \otimes \Lambda(\psi)(\varepsilon_{\cyc}^q \underline{\varepsilon}_{\cyc})) \lra D(T_{f,Y,\circ}^{\quo}) \otimes \Lambda,
$$
since $H^0(\mathbb Q_p,T_{f,\circ}^{\quo}(\psi)) = 0$ because of the assumption that $\alpha_f \psi(p) \not\equiv 1$ modulo $\mathfrak p$. This map is characterized by the interpolation property formulated in \cite[Appendix B]{LZ}. Next we apply the pairing of \eqref{etaf} and the result follows.
\end{proof}

In particular, for $q=0$, we recover the homomorphism introduced in \cite[\S3.4]{RiRo3}. The assumption $\alpha_f \psi(p) \not \equiv 1$ modulo $\mathfrak p$ is necessary to have image in $\Lambda$, but it may be still possible that for a fixed value of $s$ the image is integral even though the condition is not satisfied (see \cite[\S8.2]{KLZ2} for a precise description of the kernel and cokernel of that map).

The following result, essentially due to Kato (and reformulated in our language by Bertolini and Darmon), shows that there exists a $\Lambda$-adic cohomology class whose bottom layer is precisely $\kappa_f$ and whose image under a suitable regulator map recovers the $p$-adic $L$-function attached to $f^*$. To fix notations, let $L_p(f^*,\psi,s)$ stand for the Mazur--Swinnerton-Dyer $p$-adic $L$-function attached to a modular form $f^*$ and a Dirichlet character $\psi$, with the conventions used in Section 3 of loc.\,cit.. Let $L_p(f^*,E_k(\chi_1,\chi_2),s)$ be the Hida--Rankin $p$-adic $L$-function attached to the two modular forms $f^*$ and $E_k(\chi_1,\chi_2)$.

\begin{propo}\label{kato-law}
Under the assumptions of Proposition \ref{perrin1}, there exists a $\Lambda$-adic cohomology class \[ \kappa_{f,\infty} \in H^1(\mathbb Q, T_{f,Y} \otimes \Lambda^-(\varepsilon_{\cyc}^q \underline{\varepsilon}_{\cyc})) \] such that:
\begin{enumerate}

\item[(a)] There is an explicit reciprocity law
\[ \mathcal L_{f}^-(\res_p(\kappa_{f,\infty})^-) = L_p(f^*, E_{\ell}(1,\chi),q+s), \] and there is a factorization \[ L_p(f^*, E_{\ell}(1,\chi),q+s) = \frac{-L_p(f^*,\chi,\frac{k+\ell-m}{2})}{\mathfrak g(\bar \chi)} \times L_p(f^*,q+s). \]
Here, $\res_p$ stands for the map corresponding to localization at $p$ and $\res_p(\kappa_{f,\infty})^-$ is the map induced in cohomology from the projection map $T_{f,Y} \rightarrow T_{f,Y,\circ}^{\quo}$.

\item[(b)]  The bottom layer $\kappa_{f}(q)$ lies in $H_{\fin}^1(\mathbb Q, T_{f}(q))$ and satisfies \[ \kappa_{f}(q) = \mathcal E_f \cdot \kappa_f, \] where $\mathcal E_f$ is the Euler factor \[ \mathcal E_f = (1- \bar \chi(p) \beta_{f}  p^{m-c-1})(1-\bar \chi_f \chi(p) \beta_{f} p^{\ell-c-1})(1- \alpha_{f}p^{-1})(1-\beta_{f}p^{-1}). \]
\end{enumerate}
\end{propo}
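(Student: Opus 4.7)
The strategy follows Kato's original construction \cite{Kato}, reformulated $\Lambda$-adically and combined with a $p$-adic Artin formalism that accounts for the Eisenstein decomposition of $E_\ell(1,\chi)$. The two inputs are (i) the cyclotomic deformation of one of the two Eisenstein classes cup-producing to give $\kappa_f$, and (ii) the explicit reciprocity law comparing $\mathcal L_f^-$ with the Hida--Rankin convolution $p$-adic $L$-function.

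For the construction of $\kappa_{f,\infty}$, recall that $\kappa_f$ is the $f$-isotypic projection of a cup product $\delta_g \cup \delta_h$ of two Eisenstein classes in \'etale cohomology. One of them, say $\delta_h$, admits a $\Lambda$-adic interpolation along the cyclotomic tower, coming from Kato--Siegel units, whose deformation into a $\Lambda$-adic Eisenstein class is recalled in \cite[\S4--5]{KLZ}. Cupping this $\Lambda$-adic class with the fixed Eisenstein class at weight $\ell$ and projecting to the $f$-isotypic quotient yields $\kappa_{f,\infty} \in H^1(\mathbb Q, T_{f,Y} \otimes \Lambda^-(\varepsilon_{\cyc}^q \underline{\varepsilon}_{\cyc}))$. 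The class is naturally supported on $\Lambda^-$ because the relevant moments of the Eisenstein measure vanish in the opposite parity.

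For (a), apply the Perrin-Riou regulator $\mathcal L_f^-$ of Proposition \ref{perrin1} to $\res_p(\kappa_{f,\infty})^-$. By the $\Lambda$-adic form of Kato's explicit reciprocity law (\cite[Thm.\,10.2.2]{KLZ2}), this image equals the Hida--Rankin convolution $L_p(f^*, E_\ell(1,\chi), q+s)$. The factorization as the product of $-L_p(f^*,\chi,(k+\ell-m)/2)/\mathfrak g(\bar\chi)$ and $L_p(f^*,q+s)$ is the $p$-adic manifestation of Artin formalism: the Galois representation attached to $E_\ell(1,\chi)$ decomposes as $\mathbf{1} \oplus \chi$, so the Rankin--Selberg convolution $p$-adically factors into a product of Mazur--Swinnerton-Dyer $L$-functions, one of which is constant in $s$ and evaluated at the critical point $(k+\ell-m)/2$. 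This is checked on the Zariski-dense subset of classical arithmetic points of $\mathcal W^\circ$ by matching interpolation formulas, in parallel with the analogous discussion in \cite[\S3]{RiRo3}.

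For (b), the bottom layer $\kappa_f(q)$ is the specialization of $\kappa_{f,\infty}$ at the cyclotomic character $\varepsilon_{\cyc}^q$, and it automatically lies in the Bloch--Kato finite part because the local condition at $p$ is preserved under cyclotomic specialization. The relation $\kappa_f(q) = \mathcal E_f \cdot \kappa_f$ then compares the image of Kato's finite-level class $\kappa_f$ with the bottom of its $\Lambda$-adic deformation: the factors $(1-\alpha_f p^{-1})(1-\beta_f p^{-1})$ come from removing the $p$-Euler factor of the Mazur--Swinnerton-Dyer $L$-function in the interpolation formula, while the factors $(1-\bar\chi(p)\beta_f p^{m-c-1})$ and $(1-\bar\chi_f\chi(p)\beta_f p^{\ell-c-1})$ encode the ordinary stabilization at $p$ of the two Eisenstein classes in Kato's construction. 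The main obstacle, in both (a) and (b), is the careful bookkeeping of Gauss sums, Euler factors, and periods: the Hida--Rankin convolution and the Mazur--Swinnerton-Dyer $L$-function have interpolation formulas with subtly different conventions, and pinning down the precise sign and Gauss-sum normalization in the factorization of (a), as well as the exact shape of $\mathcal E_f$ in (b), requires term-by-term matching on classical points.
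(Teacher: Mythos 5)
The paper's own proof of this proposition is a one-line citation: ``This is due to Kato \cite{Kato}; with the current formulation, see more specifically the work of Bertolini and Darmon \cite[Theorems 4.4 and 5.1]{BD}.'' Your proposal, by contrast, attempts to reconstruct the content of those references, and the sketch you give is a fair account of the architecture of the argument: a $\Lambda$-adic deformation of one of the two Eisenstein classes gives $\kappa_{f,\infty}$; the Perrin-Riou regulator of its localization is a Hida--Rankin convolution; the Artin decomposition $\mathbf{1}\oplus\chi$ of the Eisenstein Galois representation yields the factorization; and the Euler factor $\mathcal{E}_f$ arises from the ordinary $p$-stabilization of the two Eisenstein classes together with removal of the $p$-Euler factor. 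That is indeed the route Kato and Bertolini--Darmon take.

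One caveat on attribution: you cite \cite[Thm.\,10.2.2]{KLZ2} as the $\Lambda$-adic reciprocity law driving part (a). In this paper, that theorem is invoked for the \emph{Beilinson--Flach} case (Proposition~\ref{bf-law}), where one of the two forms is genuinely cuspidal. The Beilinson--Kato setting, in which both inputs are Eisenstein classes, has a different structure, and the relevant explicit reciprocity law is Kato's original, reformulated (in exactly the form used here, including the factorization of the Hida--Rankin $p$-adic $L$-function attached to $f^*$ and $E_\ell(1,\chi)$ into a product of two Mazur--Swinnerton-Dyer $L$-functions) in \cite[Thms.\,4.4 and 5.1]{BD}. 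Your proof plan would be substantively correct if that reference were substituted; otherwise the logic is sound but the cited theorem does not literally cover the case at hand. You are also right to flag that matching the Gauss-sum normalizations and Euler factors term-by-term is where the real work lies, but the paper's strategy is precisely to outsource all of that bookkeeping to Bertolini--Darmon rather than reprove it.
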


\begin{proof}
This is due to Kato \cite{Kato}; with the current formulation, see more specifically the work of Bertolini and Darmon \cite[Theorems 4.4 and 5.1]{BD}.
\end{proof}

\subsection{The Beilinson--Flach Euler system}\label{sec:bf}

We now introduce the Beilinson--Flach classes and a reciprocity law for them. See \cite{KLZ} and \cite{KLZ2} for details on the construction. Let $f \in S_f(N,\chi_f)$ and $g \in S_{\ell}(N,\chi_g)$ be two cuspidal eigenforms with $\chi_f \chi_g \not\equiv 1 \pmod{\mathfrak p}$, and let $s$ be an integer such that $1 \leq s < \min(k,\ell)$. The Beilinson--Flach element constructed in \cite{KLZ} is a class
\begin{equation}\label{class:bf}
\kappa_{f,g} \in H^1(\mathbb Q, T_{f,Y} \otimes T_{g,Y}(1-s)),
\end{equation}
obtained via the pushforward of a class \[ \delta_h \in H_{\et}^1(\bar Y, \TSym^{m-2}(\mathscr H_{\mathbb Z_p})(1)) \] along the diagonal $Y \hookrightarrow Y^2$. Here, we can see the cyclotomic twist $s$ as a shortcut for $c-m+1$. Observe once more that these classes have no denominators at $p$; see \cite[Rk. 7.1.2]{KLZ} and \cite[\S3.2]{KLZ2}.

In the case where $k=\ell=2$ and $s=1$, the construction can be summarized as follows: take the Eisenstein element $u_{1, \chi_f \chi_g} \in H_{\et}^1(Y, \mathbb Z_p(1))$ and consider the Gysin map (push-forward of appropriate sheaves) along the diagonal $Y \hookrightarrow Y \times Y$ to get a class in $H_{\et}^3(Y^2,\mathbb Z_p(2))$. Applying the Hochschild--Serre map and projecting to the $(f,g)$-istoypic component we get the desired class. See e.g. \cite[\S4]{KLZ}. (It must be pointed out that we are allowed to use the Hochschild--Serre identification since the $H^3$ of a surface over $\bar{\mathbb Q}$ vanishes.)


\begin{remark}
The classes we are working with are the so-called {\it Eisenstein classes} in the terminology of \cite[\S8]{KLZ2}. Observe that the Euler factors that appear at Proposition 8.1.3 of loc.\,cit. are precisely the same as those of the previous section (except for the $c$-factor that arises in the reciprocity law for Beilinson--Flach elements when we vary $f$ and $g$ in families and which plays no role when $\chi_f \chi_g \neq 1$).
\end{remark}

\begin{propo}\label{perrin2}
Assume that $\alpha_f \alpha_g^{-1} \chi_g(p) \not \equiv 1 \pmod{\mathfrak p}$. Then there exists a homomorphism of $\Lambda$-modules \[ \mathcal L_{f,g}^{-+}: H^1(\mathbb Q_p, T_{f,Y,\circ}^{\quo} \otimes T_{g,Y}^{\sub} \otimes \Lambda(\varepsilon_{\cyc}\underline{\varepsilon}_{\cyc})) \longrightarrow \Lambda \] satisfying the following interpolation property: for $r \in \mathcal W^{\circ}$, the specialization of $\mathcal L_{f,g}^-$ at $s$ is the homomorphism \[ \mathcal L_{f,g,s}^{-+}: H^1(\mathbb Q_p, T_{f,Y,\circ}^{\quo} \otimes T_{g,Y}^{\sub} (s)) \longrightarrow \mathcal O_{\mathfrak p} \] given by \[ \mathcal L_{f,g,s}^{-+} = \frac{1-\alpha_f^{-1} \beta_g^{-1} p^{-s}}{1-\alpha_f \beta_g p^{s-1}} \times \begin{cases} \frac{(-1)^{s+1}}{s!} \times \langle \log_{\BK}, t^{s+1} \tilde \eta_{f^*} \otimes \omega_{g^*} \rangle & \text{ if } s \geq 0 \\ (-s-1)! \times \langle \exp_{\BK}^*, t^{s+1} \tilde \eta_{f^*} \otimes \omega_{g^*} \rangle & \text{ if } s < 0. \end{cases} \]
\end{propo}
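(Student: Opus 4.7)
The plan is to mimic the proof of Proposition \ref{perrin1}, now working with the rank-one crystalline $G_{\mathbb Q_p}$-module $T_{f,Y,\circ}^{\quo} \otimes T_{g,Y}^{\sub}$. The first factor is unramified with crystalline Frobenius eigenvalue $\alpha_f$; the second is crystalline with Frobenius eigenvalue $\beta_g$ (up to a twist by $\chi_g$ and by a power of $\varepsilon_{\cyc}$ accounting for the Hodge--Tate weight of $T_{g,Y}^{\sub}$). Consequently, the tensor product is a crystalline character whose relevant Frobenius eigenvalue is $\alpha_f\beta_g$, and the construction of $\mathcal L_{f,g}^{-+}$ proceeds in two steps: first apply a $\Lambda$-adic regulator, then contract against $\tilde\eta_{f^*}\otimes\omega_{g^*}$.

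First I would invoke the Loeffler--Zerbes extension of Perrin-Riou's $\Lambda$-adic logarithm theory. Specifically, \cite[Theorem 8.2.3]{KLZ2} provides an injective $\Lambda$-linear map
$$
H^1(\mathbb Q_p, T_{f,Y,\circ}^{\quo} \otimes T_{g,Y}^{\sub} \otimes \Lambda(\varepsilon_{\cyc}\underline{\varepsilon}_{\cyc})) \lra D(T_{f,Y,\circ}^{\quo} \otimes T_{g,Y}^{\sub}) \otimes \Lambda,
$$
characterized by the interpolation formula of \cite[Appendix B]{LZ}. For this map to be defined one needs the vanishing of the associated local $H^0$; using the relation $\alpha_g\beta_g = \chi_g(p)p^{\ell-1}$ together with the fact that $T_{f,Y,\circ}^{\quo}$ is unramified with Frobenius $\alpha_f$, the required vanishing translates precisely into the assumption $\alpha_f\alpha_g^{-1}\chi_g(p)\not\equiv 1\pmod{\mathfrak p}$ of the proposition.

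Next I would compose with the pairing $\tilde\eta_{f^*}\otimes\omega_{g^*}$. The map $\tilde\eta_{f^*}$ is the isomorphism $D(T_{f,Y,\circ}^{\quo})\to\mathcal O_{\mathfrak p}$ set up in \eqref{etaf}, while $\omega_{g^*}$ plays the analogous role for the sub-lattice of $g^*$, in the spirit of \eqref{omegaf} and \cite[\S10.1.2]{KLZ2}. Tensoring and extending $\Lambda$-linearly yields the desired homomorphism $\mathcal L_{f,g}^{-+}$ to $\Lambda$. The interpolation formula at $s\in\mathcal W^{\circ}$ then drops out of the interpolation property of the $\Lambda$-adic regulator: the Euler factor $(1-\alpha_f^{-1}\beta_g^{-1}p^{-s})/(1-\alpha_f\beta_g p^{s-1})$ encodes the crystalline Frobenius eigenvalue of the tensor, while the case distinction $s\geq 0$ versus $s<0$ reflects whether one is in the range of the Bloch--Kato logarithm or of the dual exponential map, and the factors $(-1)^{s+1}/s!$ respectively $(-s-1)!$ are the standard $\Gamma$-factor corrections.

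The main obstacle is bookkeeping rather than substance: one must match the cyclotomic twist $\varepsilon_{\cyc}\underline{\varepsilon}_{\cyc}$ on the $\Lambda$-adic side with the shift in the variable $s$ and the power $t^{s+1}$ that appears inside the pairing, using consistent conventions for the Hodge--Tate weight of $T_{g,Y}^{\sub}$ and the duality between $(f,g)$ and $(f^*,g^*)$. Once these conventions are fixed the identity is dictated by Perrin-Riou theory, in strict parallel with Proposition \ref{perrin1}.
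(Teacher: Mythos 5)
Your proposal is correct and follows essentially the same route as the paper: the paper's proof is a one-line appeal to the general theory of Perrin-Riou $\Lambda$-adic regulator maps, citing \cite[Thm. 8.2.8]{KLZ2} (the formulation for a product of two forms), while you unwind this by observing that $T_{f,Y,\circ}^{\quo}\otimes T_{g,Y}^{\sub}$ is itself a rank-one crystalline module and then applying the rank-one regulator of \cite[Thm. 8.2.3]{KLZ2} together with the $H^0$-vanishing dictated by $\alpha_f\alpha_g^{-1}\chi_g(p)\not\equiv 1\pmod{\mathfrak p}$ and the pairing against $\tilde\eta_{f^*}\otimes\omega_{g^*}$. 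This is the same Coleman/Perrin-Riou/Loeffler--Zerbes machinery, just entered via the rank-one result rather than the two-form packaged statement.
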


\begin{proof}
As in Proposition \ref{perrin1}, this follows from the general theory of Perrin-Riou maps (see \cite[Thm. 8.2.8]{KLZ2} for this formulation).
\end{proof}

For the following result, let $L_p(f^*, g^*,s)$ stand for the Hida--Rankin $p$-adic $L$-function attached to the convolution of two modular forms. Note that this function is not symmetric on the two modular forms. Let $\chi = \bar \chi_f \bar \chi_g$. The following result, established by Kings, Loeffler and Zerbes, shows that there exists a $\Lambda$-adic class whose bottom layer is $\kappa_{f,g}$ (up to a suitable Euler factor) and which recovers the Hida--Rankin $p$-adic $L$-functions.

\begin{propo}\label{bf-law}
Under the assumptions of Proposition \ref{perrin2}, there exists a $\Lambda$-adic cohomology class \[ \kappa_{f,g,\infty} \in H^1(\mathbb Q, T_{f,Y} \otimes T_{g,Y} \otimes \Lambda(\varepsilon_{\cyc} \underline{\varepsilon}_{\cyc})) \] such that:
\begin{enumerate}
\item[(a)] There is an explicit reciprocity law
\[ \mathcal L_{f}^-(\res_p(\kappa_{f,g,\infty})^{-+}) = L_p(f^*, g^*,1+s), \]
where $\res_p$ stands for the map corresponding to localization at $p$ and $\res_p(\kappa_{f,g,\infty})^{-+}$ is the map induced in cohomology from the projection map $T_{f,Y} \otimes T_{g,Y} \rightarrow T_{f,Y,\circ}^{\quo} \otimes T_{g,Y}^{\sub}$.

\item[(b)]  The bottom layer $\kappa_{f,g}(s) \in H^1(\mathbb Q, T_{f,Y} \otimes T_{g,Y}(s))$ satisfies \[ \kappa_{f,g}(s) = \mathcal E_{f,g} \cdot \kappa_{f,g}, \] where $\mathcal E_{f,g}$ is the Euler factor \[ \mathcal E_{f,g} = (1-\beta_f \alpha_g p^{-s}) (1-\beta_f \beta_g \chi(p) p^{s+1-k-\ell})(1-\beta_g \alpha_f p^{-s}) (1-\beta_g \beta_f p^{-s}). \]
\end{enumerate}
\end{propo}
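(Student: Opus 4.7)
The plan is to deduce Proposition \ref{bf-law} from the Euler system machinery of Kings--Loeffler--Zerbes, essentially by assembling the $\Lambda$-adic Beilinson--Flach class, computing its image under the Perrin-Riou regulator from Proposition \ref{perrin2}, and comparing with the interpolation property of the Hida--Rankin $p$-adic $L$-function.

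First, I would construct the class $\kappa_{f,g,\infty}$. The starting point is the Rankin--Iwasawa class living in the cohomology of the product of modular curves (obtained by pushing forward Eisenstein classes along the diagonal and twisting by Iwasawa-theoretic coefficients, as in \cite[\S5]{KLZ}); projecting to the $(f,g)$-isotypical component and applying the appropriate Hochschild--Serre map produces a class in $H^1(\mathbb Q, T_{f,Y}\otimes T_{g,Y}\otimes \Lambda(\varepsilon_{\cyc}\underline{\varepsilon}_{\cyc}))$. The norm-compatibility relations (tame and wild) verified in loc.\,cit.\ guarantee that this construction makes sense as an element of Iwasawa cohomology.

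For part (a), the strategy is to apply the Perrin-Riou homomorphism $\mathcal L_{f,g}^{-+}$ of Proposition \ref{perrin2} to $\res_p(\kappa_{f,g,\infty})^{-+}$, the projection to the quotient-sub piece at $p$. By the interpolation property of $\mathcal L_{f,g}^{-+}$, the value at a classical point $s$ is a pairing of the Bloch--Kato logarithm (or dual exponential, depending on the range) of the specialized class with $\tilde\eta_{f^*}\otimes \omega_{g^*}$. Invoking the explicit reciprocity law of \cite[Thm.\,10.2.2]{KLZ2} (equivalently, the syntomic computation of Besser type used there), this pairing computes the Hida--Rankin $p$-adic $L$-function $L_p(f^*,g^*,1+s)$, up to the Euler factors already absorbed into $\mathcal L_{f,g,s}^{-+}$ and the periods $\tilde\eta_{f^*}$, $\omega_{g^*}$ fixed in Section \ref{sec:periods}. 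Since both sides belong to $\Lambda$ and agree at a Zariski-dense set of classical $s$, the identity of $\Lambda$-adic elements in part (a) follows.

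For part (b), I would specialize $\kappa_{f,g,\infty}$ at the arithmetic point corresponding to $s$ (i.e.\,to $\varepsilon_{\cyc}^{s}$). By construction of the $\Lambda$-adic class through Rankin--Iwasawa deformation of the finite-level Beilinson--Flach classes, the bottom layer is, up to a multiplier reflecting the passage between the Iwasawa-theoretic normalization and the finite-level one, the class $\kappa_{f,g}(s)$. Computing this multiplier using the norm relation at $p$ and the semi-local description at $p$ produces precisely the Euler factor $\mathcal E_{f,g}$, which is the standard Euler factor at $p$ appearing at \cite[Prop.\,5.4.1]{KLZ} for the Beilinson--Flach Euler system.

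The main obstacle in writing this out carefully is the bookkeeping of Euler factors, twists, and signs: one must keep consistent conventions between the normalizations of $\mathcal L_{f,g}^{-+}$ (which already contains the factor $(1-\alpha_f^{-1}\beta_g^{-1}p^{-s})/(1-\alpha_f\beta_g p^{s-1})$) and the Euler factor $\mathcal E_{f,g}$ in part (b), which involves $\beta_f$ paired with both roots of $g$ as well as the Nebentype twist $\chi$. These Euler factors are not ad hoc but match the $p$-local behaviour of the corresponding $L$-functions, and the calculation has been done in \cite[\S8-\S10]{KLZ2}; thus, after fixing our periods $\tilde\eta_{f^*}$ and $\omega_{g^*}$ as in \eqref{etaf} and \eqref{rig-iso2}, the reciprocity law and the bottom-layer identity fall out of loc.\,cit.
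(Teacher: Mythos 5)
Your proposal is correct and takes essentially the same route as the paper: the paper's proof is a one-line citation of \cite[Thm.\,10.2.2]{KLZ2} specialized to fixed modular forms $f$ and $g$, and your write-up is simply a faithful unpacking of what that citation involves (the Rankin--Iwasawa construction of the $\Lambda$-adic class, the Perrin-Riou regulator from Proposition \ref{perrin2}, and the Euler factor bookkeeping at the bottom layer).
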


\begin{proof}
This follows from considering the result of \cite[Thm. 10.2.2]{KLZ2} when the two modular forms are fixed.
\end{proof}

In particular, for our applications we do not need to consider the whole cyclotomic variation, and it is enough to consider our attention to {\it geometric} twists, that is, values of $s$ for which certain inequalities are satisfied (and in particular, the results of \cite{KLZ} suffice).

\subsection{Diagonal cycles}\label{sec:cycles}

We finally introduce the cohomology classes obtained via diagonal cycles and their reciprocity laws, following \cite{DR3}, \cite{BSV-munster} and \cite[\S3]{BSV}. We also assume that the classes we are going to deal with have no denominators at $p$, and following the discussion of \cite[Rk. 3.3]{BSV} this is always the case provided that $p$ is greater than the weights. Consider a triple of cuspidal eigenforms $f \in S_k(N,\chi_f)$, $g \in S_{\ell}(N,\chi_g)$ and $h \in S_m(N,\chi_h)$, with $k+\ell+m$ even and $\chi_f \chi_g \chi_h = 1$; we also assume that the Galois representation attached to $f$ is absolutely irreducible. Let $c=(k+\ell+m-2)/2$. The diagonal cycle class is an element
\begin{equation}\label{class:diag} \kappa_{f,g,h} \in H^1(\mathbb Q, T_{f,Y} \otimes T_{g,Y} \otimes T_{h,Y}(1-c)).
\end{equation}
When the weights are all two, the class is constructed by pushing forward the class $1 \in H_{\et}^0(Y, \mathbb Z_p)$ along the diagonal inclusion $Y \hookrightarrow Y^3$; this gives an element in $H^4(Y^3,\mathbb Z_p(2))$, which gives the desired class after applying the Hochschild--Serre map following by projection to the $(f,g,h)$-isotypic component. Following \cite{DR1}, one can also view the diagonal class directly over the closed modular curve, which in our case is a consequence of the congruence relations we will develop (at least modulo $p$).

Contrary to the previous settings, the absence of a cyclotomic variable makes the situation slightly different. Fortunately, we only need the reciprocity law pointwise, at a point of weight $(k,\ell,m)$. If a greater level of generality were needed, we are allowed to move all three modular forms $(f,g,h)$ along a Hida (or Coleman) family, but restricting to the central twist. Although the Perrin-Riou map makes sense for the whole family of twists, since the classes are only available for the central ones, we restrict to those ones from now on.

\begin{defi}\label{perrin3}
Assume that $\alpha_f \chi_f^{-1}(p) \alpha_g^{-1} \alpha_h^{-1} \not \equiv 1$ modulo $\mathfrak p$. Then we define $\mathcal L_{f,g,h}^{-++}$ as the morphism of $\mathbb Z_p$-modules \[ \mathcal L_{f,g,h}^{-++}: H^1(\mathbb Q_p, T_{f,Y,\circ}^{\quo} \otimes T_{g,Y}^{\sub} \otimes T_{h,Y}^{\sub}(1-c)) \longrightarrow \mathcal O_{\mathfrak p} \] given by \[ \mathcal L_{f,g,h}^{-++} = \frac{1-\alpha_f^{-1} \beta_g^{-1} \beta_h^{-1} p^{-1}}{1-\alpha_f \beta_g \beta_h} \times  \langle \log_{\BK}, \tilde \eta_{f^*} \otimes \omega_{g^*} \otimes \omega_{h^*} \rangle \]
\end{defi}

For the following result, let $L_p(f^*, g^*, h^*,c)$ stand for the special value at $s=c$ of the triple product $p$-adic $L$-function associated to the Hida families passing through $(f,g,h)$, as introduced in \cite[\S4]{DR1}. As recalled in loc.\,cit., we can also interpret these values in terms of a suitable pairing, from which it is clear the parallelism with the Hida--Rankin one used in previous sections.

When dealing with diagonal cycles, we implicitly fix a choice of test vectors, for instance as in \cite{Hs} (note that we can do it since the dominant $p$-adic $L$-function will never be the Eisenstein one). To keep the parallelism with the previous settings, let \[ \kappa_{f,g,h,\fami} = \mathcal E_{f,g,h} \cdot \kappa_{f,g,h}, \] where $\mathcal E_{f,g,h}$ is the Euler factor \[ \mathcal E_{f,g,h} = (1-\beta_f \alpha_g \beta_h p^{-c}) (1-\beta_f \beta_g \alpha_h p^{-c})(1-\alpha_f \beta_g \beta_h p^{-c}) (1-\beta_f \beta_g \beta_h p^{-c}). \] We can understand this class as the result of the specialization of a class varying in Hida families.

\begin{propo}\label{diag-law}
Under the conditions of Definition \ref{perrin3}, there is an equality
\[ \mathcal L_{f,g,h}^{-++}(\res_p(\kappa_{f,g,h,\fami})^{-++}) = L_p(f^*, g^*, h^*,c), \]
where $\res_p$ stands for the map corresponding to localization at $p$ and $\res_p(\kappa_{f,g,h,\fami})^{-++}$ is the map induced in cohomology from the projection map $T_{f,Y} \otimes T_{g,Y} \otimes T_{h,Y}(1-c) \rightarrow T_{f,Y,\circ}^{\quo} \otimes T_{g,Y}^{\sub} \otimes T_{h,Y}^{\sub}(1-c)$.
\end{propo}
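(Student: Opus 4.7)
The plan is to deduce this pointwise reciprocity law from the family-version explicit reciprocity law for diagonal cycle classes established by Darmon--Rotger \cite{DR2, DR3} and Bertolini--Seveso--Venerucci \cite{BSV-munster, BSV}, by specializing at the triple of classical points corresponding to $(f,g,h)$ at the central critical twist $s=c$.

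First, I would unpack the left-hand side. By Definition \ref{perrin3}, we have
\[ \mathcal L_{f,g,h}^{-++}(\res_p(\kappa_{f,g,h,\fami})^{-++}) = \frac{1-\alpha_f^{-1}\beta_g^{-1}\beta_h^{-1} p^{-1}}{1-\alpha_f \beta_g \beta_h}\,\mathcal E_{f,g,h}\cdot \bigl\langle \log_{\BK}(\res_p(\kappa_{f,g,h})^{-++}),\ \tilde\eta_{f^*}\otimes \omega_{g^*}\otimes \omega_{h^*}\bigr\rangle, \]
so the task reduces to computing the image of the local class $\res_p(\kappa_{f,g,h})^{-++}$ under the Bloch--Kato logarithm, paired against the canonical differentials. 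Because $(k,\ell,m)$ is balanced, the global class $\kappa_{f,g,h}$ is crystalline at $p$ and lies in the geometric region, so $\log_{\BK}$ is indeed the correct comparison map (dual exponentials would only appear outside the balanced region).

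Second, I would invoke the $p$-adic Gysin/Abel--Jacobi computation of this pairing. The class $\kappa_{f,g,h}$ is the image, under the degree-$3$ \'etale Abel--Jacobi map, of the generalized Gross--Kudla--Schoen cycle in the triple product of modular curves. Its image in the $(-,+,+)$-component is computed by the syntomic Abel--Jacobi formula of \cite[Thm.\,A]{DR1} (see also \cite[\S3]{BSV} and \cite[\S4]{BSV-munster}), which expresses the above pairing as an overconvergent trilinear form of the shape
\[ \bigl\langle e_{\mathrm{ord}}\bigl( d^{-(1+t)} g^{[p]}\cdot h\bigr),\ f^*\bigr\rangle, \]
where $d$ is the Serre operator, $g^{[p]}$ is the $p$-depletion of $g$, $t$ is determined by the balanced weights, and $e_{\mathrm{ord}}$ is Hida's ordinary projector. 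By the very definition of the triple product $p$-adic $L$-function in \cite[\S4]{DR1} as a Garrett--Rankin--Selberg type pairing of nearly overconvergent families against the dominant family through $f^*$, the right-hand side equals $L_p(f^*, g^*, h^*, c)$ up to an explicit interpolation defect at $p$.

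Third, I would verify that all Euler and period factors combine correctly. The interpolation formula of the triple product $p$-adic $L$-function at $(k,\ell,m,c)$ carries precisely (i) the ratio $(1-\alpha_f^{-1}\beta_g^{-1}\beta_h^{-1}p^{-1})/(1-\alpha_f\beta_g\beta_h)$, accounting for the comparison between $p$-depletion and ordinary projection for the dominant form $f^*$, and (ii) the four linear factors $(1-\beta_f\alpha_g\beta_h p^{-c})(1-\beta_f\beta_g\alpha_h p^{-c})(1-\alpha_f\beta_g\beta_h p^{-c})(1-\beta_f\beta_g\beta_h p^{-c})$, produced by the transition between the test vectors for the new forms and their $p$-stabilizations. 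These are exactly the factors packaged in $\mathcal E_{f,g,h}$ and in front of $\mathcal L_{f,g,h}^{-++}$, so they match after rearrangement.

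The main obstacle I anticipate is purely bookkeeping: tracking the precise normalization of $\tilde\eta_{f^*}$ (which is scaled by the congruence divisor $\varpi^r$ of \S\ref{sec:periods}) against the normalization of the anti-ordinary projector and the chosen test vectors in \cite{Hs}, and checking that the signs and $p$-power twists coming from the self-duality \eqref{def-etaf} are consistent with the Poincar\'e duality implicit in the triple product pairing of \cite{DR1}. Since all of this has already been carried out in the family-level statement \cite[Thm.\,B]{BSV}, the argument here amounts to its specialization at the single point $(k,\ell,m)$ with central cyclotomic twist $s=c$.
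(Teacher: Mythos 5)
Your proposal is correct and follows essentially the same route as the paper, which deduces the proposition directly from the explicit reciprocity law of Darmon--Rotger \cite[Thm.~1.3]{DR1} (in its formulation over families) together with \cite[\S3]{BSV}, specialized at the classical point $(k,\ell,m)$ with central twist $s=c$. You have simply unpacked that citation into its constituent steps (the syntomic Abel--Jacobi computation, the matching of Euler factors, and the period normalizations), which is consistent with the paper's one-line argument.
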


\begin{proof}
This follows from the reciprocity law for diagonal cycles as in e.g. \cite[Thm. 1.3]{DR1}; see also \cite[\S3]{BSV}.
\end{proof}

\section{Congruences between Beilinson--Flach and Beilinson--Kato classes}

Let $f \in S_k(N,\chi_f)$ and $g \in S_{\ell}(N,\chi_g)$ be two cuspidal eigenforms with $\chi_f \chi_g \not\equiv 1 \pmod{\mathfrak p}$. We also fix an integer $s$ with $1 \leq s < \min\{k,\ell\}$, and introduce the auxiliary variable $m=k+\ell-2s$. Recall that $F$ is a finite extension of $\mathbb Q$ containing the field of coefficients of $f$ and $g$, and $\mathcal O$ is its ring of integers. Fix algebraic closures $\bar\Q$, $\bar\Q_p$ of $\Q$ and $\Q_p$ and an embedding $\bar\Q \hookrightarrow \bar\Q_p$. This singles out a prime ideal $\mathfrak p$ of $\cO$ lying above $p$ and we let $\mathcal O_{\mathfrak p} \subset \bar\Q_p$ denote the completion of $\mathcal O$ at $\mathfrak p$. The Beilinson--Flach class we consider here is the element \[ \kappa_{f,g} \in H^1(\mathbb Q, T_{f,Y} \otimes T_{g,Y}(1-s)) \] introduced in Section \ref{sec:bf}, where we omit the dependence on $s$.

In this section, we carry out a comparison with the Beilinson--Kato element \[ \kappa_f \in H^1(\mathbb Q, T_{f,Y} \otimes \mathcal O/\mathfrak p^t (\ell-s)) \] of Section \ref{sec:kato}.

Throughout all the section, we keep the following simplifying assumptions, which are implicitly used in the different results we develop. In case of needing some extra condition for a particular claim, we will state it explicitly.
\begin{itemize}
\item The {\it weak-Gorenstein condition} for $g$, as presented in Assumption \ref{ass:goren}.
\item The residual Galois representation attached to $f$ is absolutely irreducible.
\end{itemize}


Consider now the Eisenstein congruence setting, where \[ g \equiv E_{\ell}(\chi_g,1) \pmod{\mathfrak p^t}, \] or alternatively $g^* \equiv E_{\ell}(1,\bar \chi_g)$, which implies that $\mathfrak p^t$ divides the Bernoulli number $B_2(\bar \chi_g)$.

Since $g \equiv E_{\ell}(\chi_g,1)$, we may consider the projection $T_{g,Y} \otimes \mathcal O/\mathfrak p^t \longrightarrow \mathcal O/\mathfrak p^t(\chi_g)$, that gives rise to a class \[ \bar \kappa_{f,g,1} \in H^1(\mathbb Q, T_{f,Y} \otimes \mathcal O/\mathfrak p^t(\chi_g)(1-s)). \]

When $\bar \kappa_{f,g,1}$ vanishes, and following the arguments of \cite[Prop. 4.1]{RiRo3}, we may consider a putative refinement of it, namely \[ \bar \kappa_{f,g,2} \in H^1(\mathbb Q, T_{f,Y} \otimes \mathcal O/\mathfrak p^t(\ell-s)). \]

The objective of this section is studying these cohomology classes and relate them with the arithmetic of Beilinson--Kato classes. Our study rests on the Perrin-Riou theory, which require suitable bounds on the corresponding Selmer groups coming from Iwasawa theory.

We state the assumptions that are needed to establish our results. We write $H_{\Gr}^1$ for the $H^1$ with Greenberg local conditions, resulting from imposing the unramified condition at the primes $v \neq p$ and the Bloch--Kato condition at $p$ (see e.g. \cite[\S11]{KLZ}).

We introduce now two different sets of assumptions, that are needed for the two different results that will be established.

\begin{ass}\label{ass:bf0}
\begin{enumerate}
\item[(a)] The space $H_{\Gr}^1(\mathbb Q, T_{f,Y}(\chi_g)(1-s))$ is one-dimensional.
\item[(b)] It holds that \[ H^0(\mathbb Q, T_{f,Y}(\chi_g)(1-s) \otimes \mathcal O/\mathfrak p^t) = 0, \quad H^2(\mathbb Q, T_{f,Y}(\chi_g)(1-s)) = 0 \] and $\alpha_f \chi_g(p) \not \equiv 1 \pmod{\mathfrak p}$.
\end{enumerate}
\end{ass}

\begin{ass}\label{ass:bf1}
\begin{enumerate}
\item[(a)] The space $H_{\fin}^1(\mathbb Q, T_{f,Y}(\ell-s))$ is one-dimensional.
\item[(b)] It holds that $H^2(\mathbb Q, T_{f,Y}(\ell-s)) = 0$ and $\alpha_f \not \equiv 1 \pmod{\mathfrak p}$.
\end{enumerate}
\end{ass}

Kato \cite{Kato} proved that under big image assumptions for the representation $T_{f,Y}$, condition (a) is true, so this can be seen as a mild condition (but in particular we want to exclude the case where $f$ is also congruent to an Eisenstein series). Assumption (b) may be regarded as a condition regarding non-exceptional zeros which also assures that the denominator of the Perrin-Riou map does not vanish modulo $\mathfrak p$.

\begin{remark}
The triviality of both the local $H^0$ or the local $H^2$ may be formulated in terms on a condition over the Hecke eigenvalues. If $p>k+\ell-2$, these are the following ones:
\begin{itemize}
\item The group $H^0(\mathbb Q_p, T_{f,Y}(\chi_g)(1-s) \otimes \mathcal O/\mathfrak p^t) \neq 0$ if and only if $s=1$ and $\alpha_f \chi_g(p) \equiv 1 \pmod{\mathfrak p}$.
\item The group $H^2(\mathbb Q_p, T_{f,Y}(\chi_g)(1-s)) \neq 0$ if and only if $s=1$ and $\chi_g(p) \equiv 1 \pmod{\mathfrak p}$.
\item The group $H^2(\mathbb Q, T_{f,Y}(\ell-s)) \neq 0$ if and only if $s=\ell-1$ and $\alpha_f(p) \equiv 1 \pmod{\mathfrak p}$.
\end{itemize}
If $p>k+\ell-2$ and under Assumption \ref{ass:goren}, $H^0(\Q_p, T_{f,Y}(\ell-s) \otimes \mathcal O/\mathfrak p^t) = 0$. (Observe however that the global $H^0$ may be zero even though the corresponding local group is non-trivial.)

Finally, note that if $p \leq k + \ell - 2$ we must be more cautious: $H^0(\mathbb Q_p, \mathcal O/\mathfrak p(r))$ is non-zero when $r \equiv 0$ modulo $p-1$ (the cyclotomic character pulls back to the Teichm\"uller lift). See \cite[\S3.1]{Man} for a more elaborated discussion on that issue.
\end{remark}

\begin{propo}
Under the conditions of Assumptions \ref{ass:bf0} (resp. \ref{ass:bf1}), the Selmer group with Greenberg condition $H_{\Gr}^1(\mathbb Q, T_{f,Y}(\chi_g)(1-s) \otimes \mathcal O/\mathfrak p^t)$ (resp. $H_{\Gr}^1(\mathbb Q, T_{f,Y}(\ell-s) \otimes \mathcal O/\mathfrak p^t)$) is one-dimensional.
\end{propo}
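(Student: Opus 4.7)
The plan is to descend the characteristic-zero statement in part (a) of each assumption to the $\varpi^t$-torsion coefficients, by means of the Bockstein sequence attached to multiplication by $\varpi^t$, using the cohomological vanishings in part (b) and the non-exceptional zero conditions on $\alpha_f$ to ensure that the Greenberg local conditions are preserved under reduction modulo $\mathfrak p^t$. Throughout, write $T$ for the relevant Galois representation: either $T = T_{f,Y}(\chi_g)(1-s)$ (Assumption \ref{ass:bf0}) or $T = T_{f,Y}(\ell-s)$ (Assumption \ref{ass:bf1}).

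First I would consider the short exact sequence
$$
0 \longrightarrow T \ \xrightarrow{\ \varpi^t\ } \ T \longrightarrow T \otimes \mathcal O/\mathfrak p^t \longrightarrow 0
$$
of $G_{\Q}$-modules. The associated long exact sequence in Galois cohomology, combined with the vanishing $H^0(\Q, T \otimes \mathcal O/\mathfrak p^t) = 0$ from (b), shows that $H^1(\Q, T)$ has no $\varpi$-torsion and is therefore a free $\mathcal O_{\mathfrak p}$-module. The further vanishing $H^2(\Q, T) = 0$ then yields a canonical isomorphism
$$
H^1(\Q, T)/\varpi^t \ \stackrel{\sim}{\longrightarrow} \ H^1(\Q, T \otimes \mathcal O/\mathfrak p^t).
$$
Since $H^1_{\Gr}(\Q, T)$ sits as a submodule of the free $\mathcal O_{\mathfrak p}$-module $H^1(\Q, T)$, it is itself torsion-free; by part (a) its rank is one, so $H^1_{\Gr}(\Q, T) \simeq \mathcal O_{\mathfrak p}$.

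Next I would verify that the Greenberg local conditions commute with reduction modulo $\varpi^t$, so that the global isomorphism above restricts to $H^1_{\Gr}(\Q, T)/\varpi^t \cong H^1_{\Gr}(\Q, T \otimes \mathcal O/\mathfrak p^t)$. At places $v \nmid Np$ this is the standard fact that the unramified subgroup commutes with reduction when the relevant quotient is torsion-free. At $v=p$ one uses that the filtration $T_{f,Y}^{\sub} \hookrightarrow T_{f,Y} \twoheadrightarrow T_{f,Y,\circ}^{\quo}$ reduces cleanly modulo $\mathfrak p^t$, together with the vanishing of the local $H^0$ of the relevant graded piece, which is guaranteed by the hypothesis $\alpha_f \chi_g(p) \not\equiv 1$ (resp.\ $\alpha_f \not\equiv 1$) modulo $\mathfrak p$; the weak-Gorenstein assumption (Assumption \ref{ass:goren}) ensures that the torsion $\Sigma_Y/\Sigma_X$ does not perturb the comparison between the integral sub/quotient and its free part. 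Assembling these pieces yields $H^1_{\Gr}(\Q, T \otimes \mathcal O/\mathfrak p^t) \simeq \mathcal O/\mathfrak p^t$, as desired.

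The main obstacle is precisely this local compatibility at $p$: the characteristic-zero Bloch--Kato condition appearing in Assumption (a) and the mod $\mathfrak p^t$ Greenberg condition must coincide via the reduction map, and this is exactly what the explicit non-exceptional zero hypotheses on $\alpha_f$ in part (b), combined with the weak-Gorenstein assumption, are inserted to guarantee. Once this matching is in hand, the descent argument of the first two paragraphs is formal.
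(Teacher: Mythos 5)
Your proof matches the paper's own argument, which likewise uses the long exact sequence in Galois cohomology attached to multiplication by $\varpi^t$ on $T$, together with the $H^0$ and $H^2$ vanishing hypotheses, to identify $H^1(\Q,T)/\varpi^t$ with $H^1(\Q, T\otimes\mathcal O/\mathfrak p^t)$ and then conclude from the one-dimensionality of $H^1_{\Gr}(\Q,T)$. The paper's proof compresses the remaining step into ``the conclusion now follows,'' so your additional paragraph verifying that the Greenberg local conditions commute with reduction modulo $\mathfrak p^t$ — via the unramified condition at $v\nmid Np$, the filtration at $p$, the non-exceptional zero hypotheses on $\alpha_f$, and the weak-Gorenstein assumption controlling $\Sigma_Y/\Sigma_X$ — is a welcome completion of what the paper leaves implicit rather than a deviation from its method.
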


\begin{proof}
We do the proof for the case of $T_{f,Y}(\ell-s)$, and the other case is completely analogous. Beginning with \[ 0 \longrightarrow T_{f,Y}(\ell-s) \xrightarrow{\times p} T_{f,Y}(\ell-s) \longrightarrow T_{f,Y}(\ell-1)/pT_{f,Y}(\ell-s) \longrightarrow 0, \] our assumptions guarantee that there is a short exact sequence \[ 0 \rightarrow H^1(\mathbb Q, T_{f,Y}(\ell-s)) \xrightarrow{\times p} H^1(\mathbb Q, T_{f,Y}(\ell-s)) \rightarrow H^1(\mathbb Q, T_{f,Y}(\ell-s) \otimes \mathcal O/\mathfrak p^t) \rightarrow 0, \] and therefore \[ \frac{H^1(\mathbb Q, T_{f,Y}(\ell-s))}{pH^1(\mathbb Q,T_{f,Y}(\ell-s))} \simeq H^1(\mathbb Q, T_{f,Y}(\ell-s) \otimes \mathcal O/\mathfrak p^t(\ell-s)). \] The conclusion now follows.
\end{proof}

\begin{lemma}\label{lemma:inj-kato}
The Bloch--Kato logarithm of Proposition \ref{perrin1} is injective.
\end{lemma}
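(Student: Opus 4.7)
The plan is to identify the Bloch--Kato logarithm appearing in Proposition~\ref{perrin1} as a morphism between two one-dimensional $F_{\mathfrak p}$-vector spaces, and to deduce its injectivity by showing that the inverse Bloch--Kato exponential is itself injective. For $s\geq 2-q$ the map in question is
\[
\log_{\BK} \colon H^1_{\fin}(\Q_p, V) \longrightarrow D_{\dR}(V)/\Fil^0 D_{\dR}(V),
\]
where $V := V_{f,Y,\circ}^{\quo}(\psi)(s+q-1)$ is one-dimensional and de Rham, of strictly positive Hodge--Tate weight $s+q-1\geq 1$. Positivity forces $\Fil^0 D_{\dR}(V)=0$, so the target is $D_{\dR}(V)$, of dimension one; the twist by $\varepsilon_{\cyc}^{s+q-1}$ has infinite order, so $V^{G_{\Q_p}}=0$. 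The Bloch--Kato dimension formula $\dim_{F_{\mathfrak p}} H^1_{\fin}(\Q_p, V) = \dim V/\Fil^0 + \dim V^{G_{\Q_p}} = 1$ makes the source one-dimensional as well.

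I would then invoke the Bloch--Kato fundamental exact sequence obtained from
\[
0\to \Q_p \to B_{\crys}^{\phi=1} \to B_{\dR}/B_{\dR}^+ \to 0
\]
after tensoring with $V$ and taking continuous $G_{\Q_p}$-cohomology, namely
\[
0\to V^{G_{\Q_p}} \to D_{\crys}(V)^{\phi=1} \to D_{\dR}(V)/\Fil^0 D_{\dR}(V) \xrightarrow{\exp_{\BK}} H^1_{\fin}(\Q_p,V) \to 0.
\]
Because $V^{G_{\Q_p}}=0$, the kernel of $\exp_{\BK}$ is exactly $D_{\crys}(V)^{\phi=1}$, so the task reduces to ruling out $1$ as a Frobenius eigenvalue on $D_{\crys}(V)$. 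If $\psi$ is unramified at $p$, Frobenius acts on the line $D_{\crys}(V)$ by the scalar $\alpha_f\psi(p)\,p^{-(s+q-1)}$: this cannot be $1$, since $\alpha_f\psi(p)\in\mathcal O_{\mathfrak p}^\times$ while $p^{s+q-1}\in\mathfrak p$ for $s+q-1\geq 1$. If $\psi$ is ramified at $p$, then $V$ is not crystalline and $D_{\crys}(V)=0$ outright. In either case $\exp_{\BK}$ is injective between one-dimensional $F_{\mathfrak p}$-spaces, hence an isomorphism, so $\log_{\BK}=\exp_{\BK}^{-1}$ is an isomorphism as well, and in particular injective.

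The main delicacy I anticipate is purely a bookkeeping one: matching sign conventions between the Hodge--Tate weight of $V$, the normalization of $\phi$ on $D_{\crys}$, and the Euler factor $(1-\psi(p)\alpha_f p^{s-1})^{-1}$ that appears in Proposition~\ref{perrin1}. None of these affect the dimension-theoretic skeleton above, and the running hypothesis $\alpha_f\psi(p)\not\equiv 1 \pmod{\mathfrak p}$ from that proposition ensures that the Frobenius scalar on $D_{\crys}(V)$ is not merely distinct from $1$ in characteristic zero, but also at the integral level where we will ultimately need the map for the comparison of $p$-adic $L$-functions.
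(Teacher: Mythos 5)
Your argument is correct and rests on the same two key inputs as the paper's proof, namely the vanishing of $H^0(\Q_p,V)$ (read off from the nonzero Hodge--Tate weight, exactly as the paper does) and the condition that $1$ is not a crystalline Frobenius eigenvalue on $D_{\crys}(V)$ (which the paper delegates to the running hypothesis of Proposition~\ref{perrin1} and to the kernel analysis carried out in \cite[\S 8.2]{KLZ2}). Where you differ is that you give a self-contained, finite-level derivation from the Bloch--Kato fundamental exact sequence plus a dimension count, whereas the paper simply invokes the injectivity of the big Perrin-Riou regulator established in KLZ; both routes are legitimate and yours has the advantage of making the role of the hypothesis $\alpha_f\psi(p)\not\equiv 1\pmod{\mathfrak p}$ transparent. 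One small imprecision worth flagging: the connecting map in the sequence you write actually has image $H^1_e(\Q_p,V)\subseteq H^1_\fin(\Q_p,V)$ rather than all of $H^1_\fin$, so the sequence as written is exact at the last spot only once one knows $1-\phi$ is invertible on $D_{\crys}(V)$ (equivalently $H^1_e=H^1_\fin$); but since you establish $D_{\crys}(V)^{\phi=1}=0$ and then close the argument by comparing dimensions rather than by citing exactness there, your proof is not actually circular. You could streamline by proving $\phi\neq 1$ on $D_{\crys}(V)$ first and then stating the exact sequence, or by writing $H^1_e$ in place of $H^1_\fin$ and noting the two coincide under that condition.
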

\begin{proof}
Note that $H^0(\mathbb Q_p, T_{f,Y}(\ell-s))=0$ by an immediate inspection of the Hodge--Tate weights. Then the result follows from the construction of the Bloch--Kato logarithm discussed in \cite[Section 8.2]{KLZ}.
\end{proof}

The first result of this note establishes the vanishing of $\loc_p(\bar \kappa_{f,g,1})$ under the previous assumptions. Note however that there may be interesting cases where the hypotheses fail, leading to a tantalizing connection with the theory of circular units that we expect to explore in forthcoming work.

\begin{propo}
Suppose the conditions of Assumption \ref{ass:bf0} are true. If $L_p(f,\chi_g,s) \neq 0$ modulo $p$, then the class $\bar \kappa_{f,g,1}$ vanishes.
\end{propo}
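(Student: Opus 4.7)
The strategy combines the one-dimensionality of the Greenberg Selmer group established in the preceding proposition with a local-at-$p$ injectivity argument: we would compute the image of $\bar\kappa_{f,g,1}$ under a suitable Perrin--Riou regulator in two different ways, one of which forces the class to vanish.

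First I would verify that $\bar\kappa_{f,g,1}$ lies in $H^1_{\Gr}(\mathbb Q, T_{f,Y}(\chi_g)(1-s)\otimes\mathcal O/\mathfrak p^t)$. Away from $p$ the Beilinson--Flach class is unramified, so the same holds for any projection. At $p$, the fact that $\kappa_{f,g}$ lies in the Bloch--Kato subspace $H^1_f$ (a standard consequence of its construction by \'etale pushforward along the diagonal, cf.\ \cite[\S8]{KLZ2}) implies that $\res_p(\bar\kappa_{f,g,1})$ is in the image of $H^1(\mathbb Q_p, T_{f,Y}^{\sub}(\chi_g)(1-s)\otimes\mathcal O/\mathfrak p^t)$, which is precisely the Greenberg condition. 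By the preceding proposition the Selmer group is then free of rank one over $\mathcal O/\mathfrak p^t$; a natural generator is the reduction $\bar\kappa$ of an appropriate Beilinson--Kato class for $f$ twisted by $\chi_g$ (obtained by specializing the $\Lambda$-adic class of Proposition \ref{kato-law} at the right cyclotomic point and character twist). Write $\bar\kappa_{f,g,1}=c\cdot\bar\kappa$ for some $c\in\mathcal O/\mathfrak p^t$.

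The heart of the argument is the following local computation: further project the localization at $p$ onto the Frobenius-quotient direction for $f$, producing a map
\[
H^1_{\Gr}(\mathbb Q, T_{f,Y}(\chi_g)(1-s)\otimes\mathcal O/\mathfrak p^t) \longrightarrow H^1(\mathbb Q_p, T_{f,Y,\circ}^{\quo}(\chi_g)(1-s)\otimes\mathcal O/\mathfrak p^t),
\]
and compose with the Perrin--Riou regulator $\mathcal L_f^{-}$ of Proposition \ref{perrin1} (taken with $\psi=\chi_g$). Applied to $\bar\kappa$, Kato's reciprocity (Proposition \ref{kato-law}) together with the factorization of $L_p(f^*,E_\ell(1,\chi_g),\cdot)$ produces $L_p(f^*,\chi_g,s)$ up to Euler factors; these factors are units modulo $\mathfrak p$ thanks to Assumption \ref{ass:bf0}(b), and the $L$-value itself is non-zero modulo $\mathfrak p$ by hypothesis, so the composite is an injection of rank-one $\mathcal O/\mathfrak p^t$-modules. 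Applied to $\bar\kappa_{f,g,1}$, the image is governed by the Beilinson--Flach reciprocity (Proposition \ref{bf-law}): the natural $(-+)$-Perrin--Riou map gives $L_p(f^*,g^*,1+s)$, but the map we have constructed is the $(--)$ one (Frobenius-quotient on both $f$ and $g$), and this $(--)$-projection vanishes because $H^1_f(\mathbb Q_p, T_{f,Y,\circ}^{\quo}\otimes T_{g,Y,\circ}^{\quo}(1-s))$ is trivial in the balanced range $1\leq s<\min\{k,\ell\}$ (the Hodge--Tate weight $s-1$ is non-negative, and the boundary case $s=1$ is excluded by the hypothesis $\alpha_f\chi_g(p)\not\equiv 1\pmod{\mathfrak p}$ in Assumption \ref{ass:bf0}(b)).

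Combining the two calculations yields $c\cdot(\text{unit})=0$ in $\mathcal O/\mathfrak p^t$, so $c=0$ and $\bar\kappa_{f,g,1}=0$. The main obstacle is the rigorous vanishing of the $(--)$-component of the Beilinson--Flach class: this requires tracking the image of the local Bloch--Kato finite part through the quotient-quotient projection and invoking the Euler-factor hypotheses of Assumption \ref{ass:bf0}(b) to exclude the boundary Tate twist, in parallel with the argument carried out in the weight-$2$ setting of \cite[\S4]{RiRo3}.
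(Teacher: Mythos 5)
Your proof follows essentially the same strategy as the paper: exploit that $\bar\kappa_{f,g,1}$ lies in the rank-one Greenberg Selmer group generated by the reduction of the Beilinson--Kato class, observe via Kato's reciprocity law that the generator has nonvanishing image in the $T_{f,Y}^{\quo}$-quotient direction at $p$ because $L_p(f,\chi_g,s)\not\equiv 0\pmod{\mathfrak p}$, and then note that the corresponding projection of $\bar\kappa_{f,g,1}$ vanishes by the local properties of the Beilinson--Flach class (the paper cites \cite[Lemma 8.1.5, Prop. 8.1.7]{KLZ} where you instead unwind the Hodge--Tate weight argument). Your extra care in verifying membership in $H^1_{\Gr}$ and tracking the Euler-factor unit is a faithful elaboration of the same argument.
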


\begin{proof}
Using the running assumptions, the space $H_{\Gr}^1(\mathbb Q, T_{f,Y}(\chi_g)(1-s) \otimes \mathcal O/\mathfrak p^t)$ is one-dimensional and contains a canonical element, the Beilinson--Kato class $\kappa_f$ resulting from the specialization of the element $\kappa_{f,\infty}$ in Proposition \ref{kato-law}.

The projection of this class to $H^1(\mathbb Q_p, T_{f,Y}^-(\chi_g)(1-s) \otimes \mathcal O/\mathfrak p^t)$ is non-zero, since its image under the Perrin-Riou regulator is precisely $L_p(f,\chi_g,s)$, which does not vanish due to the assumptions. However, the projection to that quotient of $\bar \kappa_{f,g,1}$ is zero because of the local properties satisfied by the Beilinson--Flach class (see \cite[Lemma 8.1.5, Prop. 8.1.7]{KLZ}).
\end{proof}

When $\bar \kappa_{f,g,1}$ vanishes modulo $\mathfrak p$, we may consider the class $\bar \kappa_{f,g,2}$, and compare it with the previous Kato class.

The comparison we want to understand is summarized in the following diagram in the case of weight two (and therefore trivial coefficients), where the upper row gives the Beilinson--Flach class and the bottom one, the Belinson--Kato element.
\[\xymatrix{
		H_{\et}^1(Y, \mathbb Z_p(1)) \ar[r] \ar[rd]& H_{\et}^3(Y^2,\mathbb Z_p(2)) \ar[r] & H^1(\mathbb Q, H_{\et}^2(\bar Y^2,\mathbb Z_p(2))) \ar[r] & H^1(\mathbb Q, T_{f,Y} \otimes T_{g,Y}) \\
		& H_{\et}^2(Y,\mathbb Z_p(2)) \ar[r] & H^1(\mathbb Q,H_{\et}^1(\bar Y,\mathbb Z_p(2))) \ar[r] & H^1(\mathbb Q,T_{f,Y}(1)).
	}  \]

When the cohomology class $\bar \kappa_{f,g,1}$ vanishes we may lift the class $\kappa_{f,g}$ to an element in the cohomology with compact support and use the opposite projection.

\begin{theorem}
Suppose that $\bar \kappa_{f,g,1}=0$ and that the conditions of Assumption \ref{ass:bf1} hold. Then the following congruence holds in $H^1(\mathbb Q, T_{f,Y} \otimes \mathcal O/\mathfrak p^t (\ell-s))$:
\[ \bar \kappa_{f,g,2} = \bar \kappa_f. \]
\end{theorem}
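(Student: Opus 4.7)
The plan is to exhibit both classes as elements of the one-dimensional $\mathcal O/\mathfrak p^t$-module $H^1_{\Gr}(\mathbb Q, T_{f,Y}(\ell-s) \otimes \mathcal O/\mathfrak p^t)$, whose rank was established in the Proposition above using Assumption \ref{ass:bf1}, and to pin down the ratio between them by computing their images under the Perrin-Riou regulator $\mathcal L_f^-$. That $\bar\kappa_f$ lies in this Selmer group is immediate from Proposition \ref{kato-law}(b), which places $\kappa_f$ (up to the Euler factor $\mathcal E_f$) inside the Bloch--Kato subspace $H^1_{\fin}$. For $\bar\kappa_{f,g,2}$, the hypothesis $\bar\kappa_{f,g,1} = 0$ allows one to lift $\kappa_{f,g} \pmod{\mathfrak p^t}$ through the short exact sequence
\[
0 \longrightarrow T_{f,Y} \otimes T_{g,X} \longrightarrow T_{f,Y} \otimes T_{g,Y} \longrightarrow T_{f,Y} \otimes (T_{g,Y}/T_{g,X}) \longrightarrow 0,
\]
where the weak-Gorenstein Assumption \ref{ass:goren} for $g$ ensures that the right-most factor reduces modulo $\mathfrak p^t$ to $\mathcal O/\mathfrak p^t(\chi_g)$; composing with the quotient $T_{g,X} \otimes \mathcal O/\mathfrak p^t \twoheadrightarrow \mathcal O/\mathfrak p^t(\ell-1)$ yields $\bar\kappa_{f,g,2}$, whose Greenberg condition away from $p$ follows from the Euler-system property of Beilinson--Flach and whose Bloch--Kato behavior at $p$ is inherited from the swap between the sub and quotient lattices induced by passing from $T_{g,Y}$ to $T_{g,X}$.

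Next I compute both Perrin-Riou images. By Proposition \ref{kato-law}(a), $\mathcal L_f^-(\res_p \bar\kappa_f)$ equals, up to $\mathcal E_f$ and an explicit unit, a specialization of $L_p(f^*, E_\ell(1,\chi), q+s) \pmod{\mathfrak p^t}$, which in turn factors as a product involving $L_p(f^*, \chi, (k+\ell-m)/2)$ and $L_p(f^*, q+s)$. By Proposition \ref{bf-law}(a) applied to the compactly supported lift of $\kappa_{f,g}$, $\mathcal L_f^-(\res_p \bar\kappa_{f,g,2})$ is a specialization of $L_p(f^*, g^*, 1+s) \pmod{\mathfrak p^t}$. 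Using the Eisenstein congruence $g^* \equiv E_\ell(1,\bar\chi_g) \pmod{\mathfrak p^t}$, the Hida--Rankin convolution reduces modulo $\mathfrak p^t$ to the same Eisenstein value that appears in Kato's reciprocity law, and a careful matching of the Euler factors $\mathcal E_f$ and $\mathcal E_{f,g}$ together with the identification of cyclotomic twists shows that the two regulator values agree modulo $\mathfrak p^t$ up to a unit.

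By the injectivity of $\mathcal L_f^-$ established in Lemma \ref{lemma:inj-kato}, this equality forces $\res_p(\bar\kappa_{f,g,2}) = \res_p(\bar\kappa_f)$ in the local quotient $H^1(\mathbb Q_p, T_{f,Y,\circ}^{\quo}(\ell-s) \otimes \mathcal O/\mathfrak p^t)$; combined with the one-dimensionality of the ambient Selmer group $H^1_{\Gr}$ and the injectivity of the projection to this local quotient on nontrivial Greenberg classes (again a consequence of Assumption \ref{ass:bf1}), we conclude $\bar\kappa_{f,g,2} = \bar\kappa_f$ globally. The main obstacle is the delicate comparison between the Hida--Rankin value $L_p(f^*, g^*, 1+s)$ and the Kato--Eisenstein factorization modulo $\mathfrak p^t$: this requires tracking the congruence divisor $\varpi^r$ absorbed into the normalization $\tilde\eta_{f^*}$ and verifying that the Euler factors $\mathcal E_f$ and $\mathcal E_{f,g}$ match in the Eisenstein-degenerate limit, exactly in the spirit of the weight-two analysis carried out in \cite{RiRo3}.
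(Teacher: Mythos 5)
Your proposal follows essentially the same route as the paper: both classes' Perrin-Riou regulator images are computed via the explicit reciprocity laws (Propositions \ref{kato-law} and \ref{bf-law}), identified modulo $\mathfrak p^t$ using the Eisenstein congruence $g^* \equiv E_\ell(1,\bar\chi_g)$, and then the equality of classes is deduced from the injectivity of the Bloch--Kato logarithm (Lemma \ref{lemma:inj-kato}) together with the one-dimensionality of the Greenberg Selmer group secured by Assumption \ref{ass:bf1}. The normalization issue you flag --- that the regulator comparison a priori only pins down the ratio ``up to a unit'' --- is precisely what the paper resolves by invoking the period relations of \cite[\S3.5]{RiRo3}, which track the congruence divisor $\varpi^r$ built into $\tilde\eta_{f^*}$ exactly as you describe.
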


\begin{proof}
From the reciprocity laws stated as Proposition \ref{perrin1} and Proposition \ref{perrin2}, we have that \[ \mathcal L_f^-(\res_p(\kappa_{f,\infty})^-) \equiv \mathcal L_{f,g}^{-+}(\res_p(\kappa_{f,g,\infty})^{-+}) \pmod{\mathfrak p^t} \] because both $p$-adic $L$-functions agree. Using Lemma \ref{lemma:inj-kato} we can upgrade this to the desired equality using the period relations discussed in \cite[Section 3.5]{RiRo3}, and the result follows from the assumptions of the statement.
Since the Greenberg subspace of $H^1(\mathbb Q, T_{f,Y}(\ell-s))$ is torsion-free of rank 1, we conclude that the Greenberg module in the right has rank 1 over $\mathcal O/\mathfrak p^t \mathcal O$.
\end{proof}

\begin{remark}
A similar approach works for extending our results to the Iwasawa cohomology classes, that is, incorporating the cyclotomic variation.
\end{remark}

\section{Congruences  between diagonal cycles and Beilinson--Flach elements}

We continue the development of the theory of Eisenstein congruences between Euler systems with the case of diagonal cycles and Beilinson--Flach elements. Consider now a triple of three cuspidal eigenforms $(f,g,h)$ and nebentype $(\chi_f, \chi_g, \chi_h)$, with common level $N$ and with $p \nmid N$. We assume that they satisfy the self-duality condition $\chi_f \chi_g \chi_h=1$, and for simplicity we assume that $\chi_h \not\equiv 1 \pmod{\mathfrak p}$.

The diagonal cycles class we consider is a global element \[ \kappa_{f,g,h} \in H^1(\mathbb Q, T_{f,Y} \otimes T_{g,Y} \otimes T_{h,Y}(1-c)). \]

In this section, we carry out a comparison with the Beilinson--Flach element \[ \kappa_{f,g} \in H^1(\mathbb Q, T_{f,Y} \otimes T_{g,Y}(m-c)). \]

Throughout all the section, we keep the following simplifying assumptions:
\begin{itemize}
\item The {\it weak-Gorenstein condition} for $h$, as presented in Assumption \ref{ass:goren}.
\item The residual Galois representations attached to $f$ and $g$ are absolutely irreducible, and the class $\kappa_{f,g,h}$ is integral at $p$.
\end{itemize}

We consider the Eisenstein congruence setting $h \equiv E_m(\chi_h,1)$. Then there is a class \[ \bar \kappa_{f,g,h,1} \in H^1(\mathbb Q, T_{f,Y} \otimes T_{g,Y} \otimes \mathcal O/\mathfrak p^t(\chi_h)(1-c)). \]

When $\bar \kappa_{f,g,h,1}$ vanishes, and as discussed before, we may consider a putative refinement of it, namely \[ \bar \kappa_{f,g,h,2} \in H^1(\mathbb Q, T_{f,Y} \otimes T_{g,Y} \otimes \mathcal O/\mathfrak p^t(m-c)). \]

We begin with the following assumptions.

\begin{ass}\label{ass:cycles0}
\begin{enumerate}
\item[(a)] The space $H_{\Gr}^1(\mathbb Q, T_{f,Y} \otimes T_{g,Y}(\chi_h)(1-c))$ is one-dimensional.
\item[(b)] It holds that \[ H^0(\mathbb Q, T_{f,Y} \otimes T_{g,Y}(\chi_h)(1-c) \otimes \mathcal O/\mathfrak p^t) = 0, \quad H^2(\mathbb Q, T_{f,Y} \otimes T_{g,Y}(\chi_h)(1-c)) = 0 \] and $\alpha_f \alpha_g^{-1} \chi_h(p), \alpha_f^{-1} \alpha_g \chi_h(p) \not \equiv 1 \pmod{\mathfrak p}$.
\end{enumerate}
\end{ass}

\begin{ass}\label{ass:cycles1}
\begin{enumerate}
\item[(a)] The space $H_{\Gr}^1(\mathbb Q, T_{f,Y} \otimes T_{g,Y}(m-c))$ is one-dimensional.
\item[(b)] It holds that \[ H^0(\mathbb Q, T_{f,Y} \otimes T_{g,Y} \otimes \mathcal O/\mathfrak p^t(m-c)) = 0, \quad H^2(\mathbb Q, T_{f,Y} \otimes T_{g,Y}(m-c)) = 0 \] and $\alpha_f \alpha_g^{-1} \not \equiv 1 \pmod{\mathfrak p}$.
\end{enumerate}
\end{ass}

Kings--Loeffler--Zerbes \cite{KLZ} proved that under big image assumptions for the representation $T_{f,Y} \otimes T_{g,Y}$, condition (a) is true, so this is a relatively mild condition. Assumption (b) may be regarded as a condition for non-exceptional zeros. As before, the previous conditions may be rephrased as a condition over the Frobenius eigenvalues modulo $p$ whenever $p$ is large enough (in particular, $p>k+\ell+m$ is enough):
\begin{itemize}
\item The group $H^0(\mathbb Q_p, T_{f,Y} \otimes T_{g,Y} \otimes \mathcal O/\mathfrak p^t(m-c)) \neq 0$ if and only if $\alpha_f(p) \equiv 1 \pmod{\mathfrak p}$ and $\alpha_g(p) \equiv 1 \pmod{\mathfrak p^t}$.
\item The group $H^2(\mathbb Q_p, T_{f,Y} \otimes T_{g,Y}(\chi_h)(1-c)) = 0$ if and only if $\chi_h(p) \equiv 1 \pmod{\mathfrak p}$.
\item The group $H^0(\mathbb Q_p, T_{f,Y} \otimes T_{g,Y} \otimes \mathcal O/\mathfrak p^t(m-c)) = 0$ if and only if $\chi_h(p) \equiv 1 \pmod{\mathfrak p^t}$.
\item The group $H^2(\mathbb Q, T_{f,Y} \otimes T_{g,Y}(m-c))=0$ if and only if $\chi_f(p) \equiv 1$ or $\chi_g(p) \equiv 1 \pmod{\mathfrak p}$ .
\end{itemize}

\begin{propo}
Under the Assumptions \ref{ass:cycles1}, the space $H_{\Gr}^1(\mathbb Q, T_{f,Y} \otimes T_{g,Y} \otimes \mathcal O/\mathfrak p^t)$ is one-dimensional.
\end{propo}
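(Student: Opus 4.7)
The plan is to follow the strategy of the analogous proposition in Section 3, now with the representation $T := T_{f,Y} \otimes T_{g,Y}(m-c)$ playing the role previously played by $T_{f,Y}(\ell-s)$. First I would consider the tautological short exact sequence of $G_{\mathbb Q}$-modules
\[ 0 \to T \xrightarrow{\varpi^t} T \to T \otimes \mathcal O/\mathfrak p^t \to 0 \]
and take the associated long exact sequence in continuous cohomology. Using the vanishing hypotheses $H^0(\mathbb Q, T \otimes \mathcal O/\mathfrak p^t) = 0$ and $H^2(\mathbb Q, T) = 0$ furnished by Assumption \ref{ass:cycles1}(b), this long exact sequence collapses to
\[ 0 \to H^1(\mathbb Q, T) \xrightarrow{\varpi^t} H^1(\mathbb Q, T) \to H^1(\mathbb Q, T \otimes \mathcal O/\mathfrak p^t) \to 0, \]
so that one obtains a canonical isomorphism $H^1(\mathbb Q, T)/\varpi^t \simeq H^1(\mathbb Q, T \otimes \mathcal O/\mathfrak p^t)$.

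The second step is to verify that the above isomorphism intertwines the Greenberg local conditions on both sides. At primes $v \neq p$ the unramified condition transfers automatically once the relevant local $H^0$'s are trivial, which holds under our running absolute irreducibility hypotheses. At $p$, the Greenberg condition is defined via the ordinary filtration on $T$, and one needs to check that this subspace descends compatibly modulo $\varpi^t$. The assumption $\alpha_f \alpha_g^{-1} \not\equiv 1 \pmod{\mathfrak p}$ guarantees that $H^0(\mathbb Q_p, T^{\sub})$ vanishes, which in turn allows the same snake lemma argument as above to apply to the sub/quotient pieces, ensuring that the reduction of the Greenberg local condition coincides with the Greenberg local condition of the reduction. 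Combining this compatibility with Assumption \ref{ass:cycles1}(a) then yields that $H^1_{\Gr}(\mathbb Q, T \otimes \mathcal O/\mathfrak p^t)$ is a free $\mathcal O/\mathfrak p^t$-module of rank one, as desired.

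The main subtlety I anticipate is precisely this local-at-$p$ compatibility: one must carefully check that the Greenberg subspace inside $H^1(\mathbb Q_p, T \otimes \mathcal O/\mathfrak p^t)$ genuinely agrees with the image of the Greenberg subspace inside $H^1(\mathbb Q_p, T)$ under reduction, and this is where the non-exceptional-zero hypothesis on $\alpha_f \alpha_g^{-1}$ is essentially used. The remainder of the argument is a routine diagram chase modeled on the proof of the analogous statement earlier in the paper, and once the local compatibility is in place the conclusion is immediate.
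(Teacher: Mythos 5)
Your proposal is correct and follows essentially the same route the paper takes for the analogous proposition in the Beilinson--Flach section: take the short exact sequence $0 \to T \to T \to T\otimes\mathcal O/\mathfrak p^t \to 0$ with $T = T_{f,Y}\otimes T_{g,Y}(m-c)$, use the vanishing of $H^0(\mathbb Q, T\otimes\mathcal O/\mathfrak p^t)$ and $H^2(\mathbb Q, T)$ from Assumption \ref{ass:cycles1}(b) to collapse the long exact sequence to an isomorphism $H^1(\mathbb Q,T)/\varpi^t \simeq H^1(\mathbb Q,T\otimes\mathcal O/\mathfrak p^t)$, then invoke Assumption \ref{ass:cycles1}(a). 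The paper leaves this argument implicit (no proof is printed; the earlier proposition's proof ends with ``the conclusion now follows''), whereas you helpfully spell out the step that is actually nontrivial --- the compatibility of the Greenberg local condition at $p$ under reduction modulo $\varpi^t$, which is where the hypothesis $\alpha_f\alpha_g^{-1}\not\equiv 1\pmod{\mathfrak p}$ and the triviality of the relevant local $H^0$'s are used.
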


The following result is proved in the same way as in the previous section using the general theory of Perrin-Riou maps.

\begin{lemma}\label{lemma:inj-bf}
Assume that $H^0(\mathbb Q_p, T_{f,Y} \otimes T_{g,Y}(m-c))=0$. Then the Bloch--Kato logarithm is injective.
\end{lemma}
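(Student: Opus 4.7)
The plan closely mirrors the proof of Lemma \ref{lemma:inj-kato}. The map in question is the Bloch--Kato logarithm underlying the specialization at $s = m-c$ of the Perrin--Riou regulator $\mathcal L_{f,g}^{-+}$ in Proposition \ref{perrin2}; by the construction recalled in \cite[Section 8.2]{KLZ2}, it coincides with the classical $\log_{\BK}$ on the crystalline representation $V := V_{f,Y,\circ}^{\quo} \otimes V_{g,Y}^{\sub}(m-c)$, paired against the fixed form $\tilde\eta_{f^*} \otimes \omega_{g^*}$. My strategy is therefore to reduce injectivity to the vanishing of $V^{G_{\mathbb Q_p}}$, and then to propagate the hypothesis down to that subquotient.

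First, I would invoke the Bloch--Kato fundamental exact sequence
$$0 \longrightarrow V^{G_{\mathbb Q_p}} \longrightarrow D_{\cris}(V) \xrightarrow{(1-\varphi,\, \pr)} D_{\cris}(V) \oplus t_V \longrightarrow H^1_f(\mathbb Q_p, V) \longrightarrow 0,$$
to deduce that the Bloch--Kato exponential, and hence its inverse $\log_{\BK}$, is an isomorphism as soon as $V^{G_{\mathbb Q_p}} = 0$ (and $1-\varphi$ is invertible on $D_{\cris}(V)$, which is automatic in the balanced setting under the running assumptions). Since $V$ is one-dimensional over $F_{\mathfrak p}$, its tangent space $t_V$ is at most one-dimensional, and pairing against the non-zero normalized form $\tilde\eta_{f^*}\otimes\omega_{g^*}$ preserves injectivity. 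The whole problem thus reduces to the vanishing $V^{G_{\mathbb Q_p}} = 0$.

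For this last step, I would propagate the hypothesis $H^0(\mathbb Q_p, T_{f,Y}\otimes T_{g,Y}(m-c)) = 0$ down to $V$. Since $V_{g,Y}^{\sub}\subset V_{g,Y}$ is a $G_{\mathbb Q_p}$-stable submodule, the tensor $V_{f,Y}\otimes V_{g,Y}^{\sub}(m-c)$ embeds into the ambient module and therefore inherits trivial invariants. The hard part will be the further descent to the quotient $V$ by $V_{f,Y}^{\sub}\otimes V_{g,Y}^{\sub}(m-c)$, which is not automatic in general; as in the ``immediate inspection of Hodge--Tate weights'' invoked in Lemma \ref{lemma:inj-kato}, this is resolved by observing that in the balanced range $k \leq \ell+m$ the relevant weights of the sub and quotient pieces cannot produce fresh $G_{\mathbb Q_p}$-fixed vectors upon quotienting, and in the boundary cases one may invoke the Frobenius eigenvalue conditions already imposed in Assumption \ref{ass:cycles1}(b).
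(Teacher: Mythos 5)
Your proposal is essentially the same as the paper's proof, which is extremely terse: the paper merely remarks, just before the lemma, that it ``is proved in the same way as in the previous section,'' i.e.\ by the argument given for Lemma~\ref{lemma:inj-kato}: note that the relevant local $H^0$ vanishes by an inspection of Hodge--Tate weights, and then cite the construction of the Bloch--Kato logarithm (or Perrin-Riou regulator) in \cite[\S8.2]{KLZ2}. Your elaboration via the Bloch--Kato fundamental exact sequence is a correct and reasonable way to fill in what ``then the result follows'' is suppressing.

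Two small remarks. First, the assertion that $\exp_{\BK}$ is an isomorphism ``as soon as $V^{G_{\mathbb Q_p}}=0$'' is not quite sharp: from the exact sequence, injectivity of $\exp_{\BK}$ on $t_V$ requires $D_{\cris}(V)^{\varphi=1}\subseteq \Fil^0 D_{\dR}(V)$, and $V^{G_{\mathbb Q_p}}=D_{\cris}(V)^{\varphi=1}\cap\Fil^0 D_{\dR}(V)$ is in general a weaker vanishing. Your parenthetical --- that $1-\varphi$ is invertible on $D_{\cris}(V)$ --- is the condition that actually does the work, and it is indeed guaranteed here by the Euler-factor nonvanishing built into Assumption~\ref{ass:cycles1}(b), so your proof is not wrong, just mildly misphrased. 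Second, the ``hard part'' you flag --- descending the $H^0$-vanishing to the one-dimensional quotient $V = V_{f,Y,\circ}^{\quo}\otimes V_{g,Y}^{\sub}(m-c)$ --- is resolved more directly than you suggest: this crystalline subquotient has a single Hodge--Tate weight $q=(\ell+m-k)/2>0$ in the balanced range, so $H^0(\mathbb Q_p,V)=0$ unconditionally; no propagation from the ambient module is needed, and this is exactly the ``immediate inspection'' the paper has in mind. You do reach this observation at the end, so the argument is complete; it just arrives there by a slightly roundabout route.
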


We now state the first congruence concerning diagonal cycles.

\begin{propo}
Suppose the conditions of Assumption \ref{ass:cycles0} hold. Then if $L_p(f,g,\chi_h,c) \neq 0$ or $L_p(g,f,\chi_h,c) \neq 0$ modulo $p$, the class $\bar \kappa_{f,g,h,1}$ vanishes.
\end{propo}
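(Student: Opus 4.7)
The plan is to mirror the argument of the preceding proposition (Beilinson--Flach to Beilinson--Kato), with diagonal cycles now in the role that Beilinson--Flach played there, and Beilinson--Flach now playing the role of Beilinson--Kato.

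First I would invoke the preceding Proposition, applied to the Galois representation $T_{f,Y} \otimes T_{g,Y}(\chi_h)(1-c)$, to conclude under Assumption \ref{ass:cycles0} that $H_{\Gr}^1(\mathbb Q, T_{f,Y} \otimes T_{g,Y}(\chi_h)(1-c) \otimes \mathcal O/\mathfrak p^t)$ is free of rank one over $\mathcal O/\mathfrak p^t$. A preliminary check shows that $\bar\kappa_{f,g,h,1}$ sits inside this Selmer group, exploiting that $\kappa_{f,g,h}$ is crystalline at $p$ and unramified at all primes $v \neq p$, and that the projection $T_{h,Y} \otimes \mathcal O/\mathfrak p^t \twoheadrightarrow \mathcal O/\mathfrak p^t(\chi_h)$ preserves the relevant local conditions at $p$.

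Next I would produce a canonical nonzero element of the Selmer group by specializing the $\Lambda$-adic Beilinson--Flach class $\kappa_{f,g,\infty}$ of Proposition \ref{bf-law} at the arithmetic point of $\mathcal W$ whose character part is $\chi_h$ and whose cyclotomic weight matches the twist $1-c$; the analogous construction applies to the companion class $\kappa_{g,f,\infty}$. By the Beilinson--Flach reciprocity law, the image of each such specialization under the Perrin--Riou map $\mathcal L_{f,g}^{-+}$ (respectively its $(g,f)$ counterpart) equals, up to nonvanishing Euler factors, the value $L_p(f, g, \chi_h, c)$ (respectively $L_p(g, f, \chi_h, c)$). The hypothesis that at least one of these is nonzero modulo $\mathfrak p$ forces the corresponding Beilinson--Flach specialization to have nonzero image in the Greenberg quotient at $p$, and hence to be a generator of the one-dimensional Selmer group.

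I would then check that $\bar\kappa_{f,g,h,1}$ projects to zero in that same Greenberg quotient. The reciprocity law of Proposition \ref{diag-law} pins down the local image of $\kappa_{f,g,h}$ at $p$ inside the $-++$ piece $T_{f,Y,\circ}^{\quo} \otimes T_{g,Y}^{\sub} \otimes T_{h,Y}^{\sub}$; all other Greenberg projections, and in particular any projection where the $h$-component is replaced by a piece on which the $T_{h,Y}^{\sub}$ direction dies, return zero. Under Assumption \ref{ass:goren} for $h$ together with the congruence $h \equiv E_m(\chi_h,1)$, the Eisenstein projection $T_{h,Y} \otimes \mathcal O/\mathfrak p^t \twoheadrightarrow \mathcal O/\mathfrak p^t(\chi_h)$ factors through $T_{h,Y,\circ}^{\quo}$ and kills $T_{h,Y}^{\sub}$, so the image of $\bar\kappa_{f,g,h,1}$ in the $-+$ quotient on the $(f,g)$ factors is the image of $\kappa_{f,g,h}$ through a composition that factors through the zero map on $T_{h,Y}^{\sub}$, and therefore vanishes. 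Combined with the existence of a generator with nonzero image in that quotient, this forces $\bar\kappa_{f,g,h,1}=0$.

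The main obstacle is the bookkeeping needed to match conventions between the various objects: one must verify that the Iwasawa specialization of the Beilinson--Flach class, passed through Proposition \ref{bf-law}, really recovers the normalization of $L_p(f,g,\chi_h,c)$ used in the statement (and not some shift of it), and that the weak--Gorenstein condition for $h$ genuinely guarantees that the composition $T_{h,Y}^{\sub} \otimes \mathcal O/\mathfrak p^t \hookrightarrow T_{h,Y}\otimes\mathcal O/\mathfrak p^t \twoheadrightarrow \mathcal O/\mathfrak p^t(\chi_h)$ is the zero map, so that the $-++$ local piece that carries the whole content of $\kappa_{f,g,h}$ is truly annihilated rather than merely reduced. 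Once these compatibilities are secured, the conclusion reduces, exactly as in the preceding proposition, to the simple observation that in a one-dimensional Selmer module a class cannot simultaneously be a generator and lie in the kernel of a linear functional that is nonzero on some other generator.
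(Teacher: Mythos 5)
Your proposal has the same overall skeleton as the paper's proof: reduce to a one-dimensional Selmer module, exhibit a generator (specialization of the Beilinson--Flach $\Lambda$-adic class) whose image in a Greenberg quotient at $p$ is pinned to the nonvanishing $p$-adic $L$-value, show that $\bar\kappa_{f,g,h,1}$ has vanishing image in that same quotient, and conclude that a class with zero projection in a rank-one module spanned by a class with nonzero projection must itself vanish.

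The one place where your argument is not quite right as stated is the justification that $\bar\kappa_{f,g,h,1}$ projects to zero. You attribute this to ``the reciprocity law of Proposition \ref{diag-law} pins down the local image inside the $-++$ piece \ldots all other Greenberg projections \ldots return zero,'' and later assert that ``the $-++$ local piece carries the whole content of $\kappa_{f,g,h}$.'' This is misleading: the reciprocity law only computes the single $-++$ projection and says nothing about the vanishing of the others; and the local image of $\kappa_{f,g,h}$ is not supported only on $-++$ --- it can a priori have nonzero components in, say, $+-+$ or $++-$. What is actually needed (and what the paper invokes via \cite[Prop.~5.8]{DR3}) is the \emph{balanced local condition}: the restriction of $\kappa_{f,g,h}$ at $p$ lies in the rank-four balanced Greenberg subspace, whose $\pm$ decomposition excludes the ``two-minus'' direction $-+-$ that arises after composing the Eisenstein projection $T_{h,Y}\twoheadrightarrow T_{h,Y}^{\quo}$ with the $-+$ Perrin--Riou quotient on the $(f,g)$ factors. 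Your intuition that the Eisenstein projection kills $T_{h,Y}^{\sub}$ is correct and does annihilate the $-++$ piece, but by itself that observation does not rule out a contribution from $-+-$; only the balanced condition does. So the argument is correct in spirit but needs the explicit local input replacing the appeal to the reciprocity law at that step.
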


\begin{proof}
The space $H_{\Gr}^1(\mathbb Q, T_{f,Y} \otimes T_{g,Y}(\chi_h)(1-c) \otimes \mathcal O/\mathfrak p^t)$ is one-dimensional under the running assumptions. It is spanned by the Beilinson--Flach class when this is non-zero; this is the class resulting from the specialization of the element $\kappa_{f,g,\infty}$ in Theorem \ref{bf-law}.

If $L_p(f,g,\chi_h,c) \neq 0$, the projection of this class to $H^1(\mathbb Q_p, T_{f,Y} \otimes T_{g,Y}^- (\chi_h)(1-c) \otimes \mathcal O/\mathfrak p^t)$ is non-zero, since its image under the Perrin-Riou regulator is precisely $L_p(f,g,\chi_h,c)$, which does not vanish due to the assumptions. However, the projection to that quotient of $\bar \kappa_{f,g,h,1}$ is zero because of the local properties satisfied by the diagonal cycle class (see \cite[Prop. 5.8]{DR3}).
\end{proof}

When $\bar \kappa_{f,g,h,1}$ vanishes, we may lift the class to the cohomology of the open modular curve and consider a diagram like the following (in weight 2), where the upper row gives the diagonal cycle and the bottom one, the Belinson--Flach element. (We have shortened the notation and written $T_{f,g,h}:= T_{f,Y} \otimes T_{g,Y} \otimes T_{h,Y}$ and $T_{f,g} := T_{f,Y} \otimes T_{g,Y}$.)

\[\xymatrix{
		H_{\et}^0(Y, \mathbb Z_p) \ar[r] \ar[rd]& H_{\et}^4(Y^3,\mathbb Z_p(2)) \ar[r] & H^1(\mathbb Q, H_{\et}^3(\bar Y^3,\mathbb Z_p(2))) \ar[r] & H^1(\mathbb Q, T_{f,g,h}(-1)) \\
		& H_{\et}^3(Y^2,\mathbb Z_p(2)) \ar[r] & H^1(\mathbb Q,H_{\et}^2(\bar Y^2,\mathbb Z_p(2))) \ar[r] & H^1(\mathbb Q,T_{f,g}).
	}  \]

\begin{theorem}
Suppose that $\bar \kappa_{f,g,h,1}=0$ and that the conditions of Assumption \ref{ass:cycles1} hold. The following congruence holds in $H^1(\mathbb Q, T_{f,g} \otimes \mathcal O/\mathfrak p^t(m-c))$:
\[ \bar \kappa_{f,g,h,2} = \bar \kappa_{f,g}. \]
\end{theorem}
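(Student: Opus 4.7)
The plan is to mirror the strategy used for the previous theorem on Beilinson--Flach and Beilinson--Kato congruences, now working at the level of triple products and convolutions. First I would invoke the two reciprocity laws of Proposition \ref{bf-law} and Proposition \ref{diag-law}: on the Beilinson--Flach side, the image of $\res_p(\kappa_{f,g,\infty})$ under $\mathcal{L}_{f,g}^{-+}$ recovers $L_p(f^*,g^*,1+s)$; on the diagonal cycle side, the image of $\res_p(\kappa_{f,g,h,\fami})$ under $\mathcal{L}_{f,g,h}^{-++}$ recovers $L_p(f^*,g^*,h^*,c)$. Since $h \equiv E_m(\chi_h,1) \pmod{\mathfrak p^t}$, and using that the isomorphism $\bar\iota_1$ of \eqref{bar-iota} identifies the free quotient of $T_{h,Y}$ with $\mathcal O/\mathfrak p^t(\chi_h)$ in a way compatible with the normalization \eqref{rig-iso} of periods, the triple product $p$-adic $L$-function degenerates modulo $\mathfrak p^t$ to the Hida--Rankin convolution $L_p(f^*,g^*,s)$ at the relevant twist (up to the Eisenstein Euler/Gauss-sum factors already encoded in $\tilde\eta_{h^*}$), exactly as in the parallel weight-one congruence pattern in \cite[\S3.5]{RiRo3}.

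Second, I would compare Euler factors: the factor $\mathcal E_{f,g,h}$ attached to $\kappa_{f,g,h,\fami}$ specializes under $\alpha_h \equiv 1$, $\beta_h \equiv \chi_h(p)p^{m-1}$ to a product matching $\mathcal E_{f,g}$ up to a unit modulo $\mathfrak p^t$ (this is the analogue of the computation of Proposition \ref{kato-law}(b) for $\kappa_f(q)$ vs. $\kappa_f$ and uses the weak-Gorenstein assumption on $h$ so that the congruence \eqref{Vfcong} produces the correct integral projection). At this point we obtain the congruence of Perrin-Riou regulator images
\[
\mathcal{L}_{f,g}^{-+}(\res_p(\bar\kappa_{f,g,h,2})^{-+}) \equiv \mathcal{L}_{f,g}^{-+}(\res_p(\bar\kappa_{f,g})^{-+}) \pmod{\mathfrak p^t},
\]
where on the left we have used the lift to compactly supported cohomology (available because $\bar\kappa_{f,g,h,1}=0$) followed by projection via $\bar\iota_2$ to $T_{h,Y}^{\sub}\otimes \mathcal O/\mathfrak p^t \simeq \mathcal O/\mathfrak p^t(m-1)$.

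Third, Lemma \ref{lemma:inj-bf}, valid under $H^0(\Q_p, T_{f,Y}\otimes T_{g,Y}(m-c))=0$ (which is part of Assumption \ref{ass:cycles1}), promotes this to an equality of local classes modulo $\mathfrak p^t$:
\[
\res_p(\bar\kappa_{f,g,h,2})^{-+} = \res_p(\bar\kappa_{f,g})^{-+} \quad \text{in } H^1(\Q_p, T_{f,Y,\circ}^{\quo}\otimes T_{g,Y}^{\sub} \otimes \mathcal O/\mathfrak p^t(m-c)).
\]
Finally, Assumption \ref{ass:cycles1}(a) together with the short exact sequence $0\to T_{f,Y}\otimes T_{g,Y}(m-c)\xrightarrow{\varpi^t} T_{f,Y}\otimes T_{g,Y}(m-c)\to (\cdot)\otimes\mathcal O/\mathfrak p^t\to 0$ and the vanishing $H^0 = H^2 = 0$ yields (as in the proof for the Selmer group preceding Lemma \ref{lemma:inj-kato}) that $H_{\Gr}^1(\Q, T_{f,Y}\otimes T_{g,Y}\otimes\mathcal O/\mathfrak p^t(m-c))$ is free of rank one over $\mathcal O/\mathfrak p^t$, so the equality of the $(-+)$-projections at $p$ forces the global equality $\bar\kappa_{f,g,h,2} = \bar\kappa_{f,g}$.

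The main obstacle I anticipate is the second step: pinning down exactly how the triple-product $p$-adic $L$-function factors, modulo $\mathfrak p^t$, when the dominant form in $\{f,g,h\}$ is $f$ (or $g$) and the Eisenstein congruence is on the non-dominant form $h$. One must verify that the choice of test vectors implicit in Proposition \ref{diag-law} is compatible with the Gauss-sum normalization in diagram \eqref{rig-iso} used to define $\tilde\eta_{h^*}$, so that the Eisenstein factor $\mathfrak g(\chi_h)^{-1}$ and the Euler factors match those appearing on the Beilinson--Flach side; this is where the weak-Gorenstein assumption on $h$ is essential, as it ensures that the free quotient $T_{h,Y,\circ}^{\quo}/\mathfrak p^t$ captures the relevant congruence without spurious torsion.
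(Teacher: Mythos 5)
Your proposal follows essentially the same route as the paper: compare the two Perrin--Riou reciprocity laws (Propositions \ref{bf-law} and \ref{diag-law}), use the Eisenstein congruence on $h$ to identify the triple-product $p$-adic $L$-value with the Hida--Rankin one modulo $\mathfrak p^t$, invoke the injectivity of the Bloch--Kato logarithm (Lemma \ref{lemma:inj-bf}), and conclude via the one-dimensionality of the Greenberg Selmer group granted by Assumption \ref{ass:cycles1}. The paper is terser --- it cites the Petersson-product expressions of \cite[Cor. 4.13]{DR1} and the period calculation of \cite[\S3.5]{RiRo3} rather than spelling out the Euler-factor bookkeeping, and it leaves implicit the final ``rank-one Selmer group $\Rightarrow$ local equality forces global equality'' step that you make explicit --- but the structure is the same. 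Two small caveats. First, with $h\equiv E_m(\chi_h,1)$ the unit/non-unit roots specialize as $\alpha_h\equiv\chi_h(p)$, $\beta_h\equiv p^{m-1}$ (your $\alpha_h\equiv 1$, $\beta_h\equiv\chi_h(p)p^{m-1}$ would be the convention for the dual $h^*$); fortunately the product $\mathcal E_{f,g,h}$ still matches $\mathcal E_{f,g}$ under $\beta_h p^{-c}=p^{-s}$ and $\chi=\chi_h$, so the conclusion of your Euler-factor step is correct. Second, your closing claim that ``equality of the $(-+)$-projections at $p$ forces global equality'' additionally needs the composite localization-then-projection map to be nonzero on a generator of the rank-one Selmer module; this follows from nonvanishing of the $p$-adic $L$-value modulo $\mathfrak p$ (the hypothesis that underlies $\bar\kappa_{f,g,h,1}=0$ via the preceding proposition), and you should flag that you are using it.
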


\begin{proof}
We study the congruence via the theory of Perrin-Riou maps. Consider the projection \[ \pi_{f,g}^{-+} \, : \, H^1(\mathbb Q, T_{f,g}(m-c) \otimes \mathcal O/\mathfrak p^t) \longrightarrow H^1(\mathbb Q_p, T_{f,Y,\circ}^{\quo} \otimes T_{g,Y}^{\sub}(m-c) \otimes \mathcal O/\mathfrak p^t). \]

From the previous proposition and the reciprocity laws of \cite{KLZ} and \cite{DR3}, we have that the images of $\bar \kappa_{f,g,h,2}$ and $\kappa_{f,g}$ under the appropriate Bloch--Kato logarithms agree, since the corresponding $p$-adic $L$-functions take the same value; this can be easily seen, for instance, using the expressions in terms of Petersson products, as in \cite[Cor. 4.13]{DR1}. We can upgrade this to the equality of the statement using the period relations discussed in \cite[Section 3.5]{RiRo3}, and the statement follows.

Using now Lemma \ref{lemma:inj-bf}, we may upgrade that result to an equality in $H^1(\mathbb Q, T_{f,Y} \otimes T_{g,Y} \otimes \mathcal O/\mathfrak p^t(m-c))$.
\end{proof}

\begin{remark}
It would be interesting to understand these congruences in terms of the explicit geometric description of the cycles, following the seminal work of Bloch \cite{Bl} and the approach of \cite{Mot}. This approach, as well as others connecting different Euler systems, is based on a comparison at the level of $p$-adic $L$-functions, and it would be nice to have a more geometric understanding of this phenomenon.
\end{remark}

\begin{remark}
A case which is specially intriguing occurs when more than one of the modular forms are congruent with Eisenstein series. However, the assumptions needed to conclude that the corresponding Greenberg cohomology spaces are one-dimensional do not hold (the big image condition does not hold since the modulo $\mathfrak p$ representations are not irreducible). We expect to end up with a single class in $H^1(\mathbb Q, \mathcal O/\mathfrak p^t(2))$, which could potentially be understood using similar techniques to those developed by Sharifi and Fukaya--Kato.
\end{remark}


\begin{thebibliography}{RR4}

\bibitem[BD]{BD}
M.~Bertolini and H.~Darmon.
{\em Kato's Euler system and rational points on elliptic curves I: a $p$-adic Beilinson formula}, {\em Israel Journal of Mathematics} {\bf 199} (2014), 163--178.

\bibitem[BDR1]{BDR1}
M.~Bertolini, H.~Darmon, V.~Rotger {\em Beilinson--Flach elements and Euler systems II: syntomic regulators and $p$-adic Rankin $L$-series}, {\em J. Algebraic Geometry} {\bf 24} (2015), 355--378.

\bibitem[BDR2]{BDR2}
M.~Bertolini, H.~Darmon, and V.~Rotger {\em Beilinson--Flach elements and Euler systems II: $p$-adic families and the Birch and Swinnerton-Dyer conjecture}, {\em J. Algebraic Geometry} {\bf 24} (2015), 569--604.

\bibitem[BDV]{BDV}
M.~Bertolini, H.~Darmon, and R.~Venerucci. {\em Heegner points and Beilinson--Kato elements: a conjecture of Perrin-Riou}, {\em Advances in Math.} {\bf 398} (2022), 1--50.

\bibitem[BSV1]{BSV-munster}
M.~Bertolini, M.~Seveso, and R.~Venerucci. {\em Diagonal classes and the Bloch--Kato conjecture}, {\em M\"unster J. of Math.} {\bf 13} (2020), 317--352.

\bibitem[BSV2]{BSV}
M.~Bertolini, M.~Seveso, and R.~Venerucci. {\em Reciprocity laws for balanced diagonal classes}, {\em Ast\'erisque} {\bf 434} (2022), 175--201.

\bibitem[Bl]{Bl}
S.~Bloch, {\em Algebraic cycles and higher $K$-theory}, {\em Advances Math.} {\bf 61} (1986), 267--304.

\bibitem[DR1]{DR1}
H. Darmon and V. Rotger. {\em Diagonal cycles and Euler systems I: a $p$-adic Gross-Zagier formula}, Annales Scientifiques de l'Ecole Normale Sup\'erieure {\bf 47}, no. 4, 47 p.

\bibitem[DR2]{DR2}
H.~Darmon and V.~Rotger. {\em
Diagonal cycles and Euler systems II: the Birch and Swinnerton-Dyer conjecture for Hasse-Weil-Artin $L$-series}, {\em Journal Amer. Math. Soc.} {\bf 30} (2017), 601--672.

\bibitem[DR4]{DR3}
H.~Darmon and V.~Rotger. {\em $p$-adic families of diagonal cycles}, {\em Ast\'erisque} {\bf 434} (2022), 29--76.

\bibitem[Fl]{Fl}
M.~Flach. {\em A finiteness theorem for the symmetric square of an elliptic curve}, {\em Invent. Math.} {\bf 57} (1980), no. 1, 83--95.

\bibitem[FK]{FK}
T.~Fukaya and K.~Kato. {\em On conjectures of Sharifi}, preprint.

\bibitem[Fu]{Fu}
W.~Fulton. {\em Equivariant Cohomology in Algebraic Geometry}. {\em Eilenberg lectures}, Columbia University, Spring 2007.

\bibitem[Hs]{Hs}
M.L.~Hsieh. {\em Hida families and $p$-adic triple product $L$-functions}, {\em Amer. J. Math.} {\bf 143} (2021), no. 2, 411--532.

\bibitem[Ka]{Kato}
K.~Kato. {\em $p$-adic Hodge theory and values of zeta functions of modular forms}, {\em Ast\'erisque} {\bf 295} (2004), no. 9, 117--290.

\bibitem[Kings]{Kings}
G.\ Kings. {\em On $p$-adic interpolation of motivic Eisenstein classes}, on {\em Elliptic Curves, Modular Forms and Iwasawa Theory - Conference in honour of the 70th birthday of John Coates}, Springer Proceedings in Mathematics and Statistics {\bf 188} (2017), 335--371.

\bibitem[KLZ]{KLZ}
G.\ Kings, D.\ Loeffler, and S.L.\ Zerbes. {\em Rankin--Eisenstein classes for modular forms}, {\em Amer. J. Math.} {\bf 142} (2020), no. 1, 79--138.

\bibitem[KLZ2]{KLZ2}
G.~Kings, D.~Loeffler, and S.L.~Zerbes. {\em Rankin-Eisenstein classes and explicit reciprocity laws}, {\em Cambridge J. Math} {\bf 5} (2017), no. 1, 1--122.

\bibitem[KrLi]{KrLi}
D.~Kriz, C.~Li, {\em Goldfeld's conjecture and congruences between Heegner points}, {\em Forum Math. Sigma} {\bf 7} (2019), e15, 80pp.

\bibitem[LLZ]{LLZ}
A.~Lei, D.~Loeffler, and S.L.~Zerbes, {\em Euler systems for Rankin Selberg convolutions}, {\em Annals Math.} {\bf 180} (2014), no. 2, 653--771.

\bibitem[LR]{LR}
D.~Loeffler and O.~Rivero, {\em Eisenstein degeneration of Euler systems}, preprint.

\bibitem[LZ]{LZ}
D.~Loeffler and S.L.~Zerbes. {\em Iwasawa theory and $p$-adic $L$-functions over $\mathbb Z_p^2$-extensions}, {\em Int. J. Number Theory} {\bf 10} (2014), no. 8, 2045--2096.

\bibitem[Man]{Man}
M.~Manji, {\em Euler systems and Selmer bounds for GU(2,1)}, preprint.

\bibitem[MVW]{Mot}
C.~Mazza, V.~Voevodsky, and C.~Weibel, {\em Notes on motivic cohomology}.

\bibitem[Nek]{Nek}
J.~Nekov\'ar. {\em $p$-adic Abel-Jacobi maps and $p$-adic heights}. Lecture notes of the author's lecture at the 1998 Conference ``The Arithmetic and Geometry of Algebraic Cycles".

\bibitem[Oh3]{Oh3}
M.~Ohta. {\em Congruence modules related to Eisenstein series}, {\em Ann. Scient. \'Ec. Norm. Sup.} {\bf 36} (2003), no. 4, 225--269.

\bibitem[RiRo]{RiRo3}
O.~Rivero and V.~Rotger. {\em Motivic congruences and Sharifi's conjecture}, to appear in {\em Amer. J. Math.}

\bibitem[Va]{Va}
V.~Vatsal. {\em Canonical periods and congruence formulas}, {\em Duke Math. J.}, {\bf 98}, no. 2 (1999), 397--419.

\end{thebibliography}
\end{document}